\documentclass[12pt,reqno]{amsart}
\usepackage[utf8]{inputenc}
\usepackage[T1]{fontenc}
\usepackage{amsmath}
\usepackage{amsthm}
\usepackage{amssymb}
\usepackage[abbrev]{amsrefs}
\usepackage{mathrsfs}
\usepackage[dvipsnames]{xcolor}
\usepackage{bm}
\usepackage{enumitem}
\usepackage{hyperref}
\usepackage{graphicx}
\usepackage[all]{xy}
\AtBeginDocument{\def\MR#1{}}
\makeatletter
\@namedef{subjclassname@2020}{%
\textup{2020} Mathematics Subject Classification}
\makeatother
\numberwithin{equation}{section}
\allowdisplaybreaks
\newtheorem{theorem}{Theorem}[section]
\newtheorem{proposition}[theorem]{Proposition}
\newtheorem{lemma}[theorem]{Lemma}
\newtheorem{corollary}[theorem]{Corollary}
\theoremstyle{remark}

\newcommand{\littletaller}{\mathchoice{\vphantom{\big|}}{}{}{}}
\newcommand\restr[2]{{\left.\kern-\nulldelimiterspace #1 \littletaller \right|_{#2}}}
\newcommand{\enpar}[1]{\left(#1\right)}
\newcommand{\enbrace}[1]{\left\lbrace#1\right\rbrace}
\newcommand{\abs}[1]{\left\lvert#1\right\rvert}
\newcommand{\norm}[1]{\left\lVert#1\right\rVert}
\newcommand{\NN}{\mathbb{N}}
\newcommand{\RR}{\mathbb{R}}
\newcommand{\F}{\mathcal{F}}

\newcommand{\Acal}{\mathcal{A}}
\newcommand{\Mcal}{\mathcal{M}}
\newcommand{\Id}{\ensuremath{{\rm{Id}}}}

\newcommand{\spn}{\operatorname{span}}

\title[Isometries without linear shadows]{Isometries without linear shadows: failure of Figiel's theorem beyond local convexity}
\author[F. Albiac]{Fernando Albiac}
\address{Institute for Advanced Materials and Mathematics (INAMAT$^{2}$) and Department of Mathematics, Statistics and Computer Sciences\\ Public University of Navarre\\
Campus de Arrosad\'{\i}a, 31006 Pamplona\\ Spain}
\email{fernando.albiac@unavarra.es}
\author[J. L. Ansorena]{Jos\'e L. Ansorena}
\address{Department of Mathematics and Computer Sciences\\
Universidad de La Rioja\\
Logro\~no\\
26004 Spain}
\email{joseluis.ansorena@unirioja.es}
\author[M. C\'uth]{Marek C\'uth}
\address{Faculty of Mathematics and Physics, Department of Mathematical Analysis\\
Charles University\\
186 75 Praha 8\\
Czech Republic}
\email{cuth@karlin.mff.cuni.cz}
\date{}
\subjclass[2020]{46A16, 46B04 (Primary), 46B85, 51F30 (Secondary)}
\keywords{$p$-Banach spaces, isometries, Lipschitz-free $p$-spaces, Figiel's theorem}
\thanks{F.\@ Albiac acknowledges the support of the Spanish Ministry for Science, Innovation, and Universities under Grant PID2025-167660NB-I00 funded by MICIU/AEI/10.13039/501100011033 and ERDF/EU. F.\@ Albiac and J.\@ L.\@ Ansorena acknowledge the support of the Spanish Ministry for Science, Innovation, and Universities under Grant PGC2018-095366-B-I00 for \emph{Functional Analysis Techniques in Approximation Theory and Applications (TAFPAA)}}
\begin{document}
\begin{abstract}
A classical theorem of Figiel asserts that every isometric embedding $\varphi\colon X\to Y$ between real Banach spaces, with $\varphi(0)=0$, admits a linear contractive left inverse on the closed linear span of $\varphi(X)$. We show that this phenomenon breaks down completely once local convexity is lost. More precisely, for every $0<p<1$ and every nonzero real Banach space $X$, we construct a $p$-Banach space $Y$ and an isometric embedding $\varphi\colon X\to Y$ such that $\varphi(0)=0$ and the closed linear span of $\varphi(X)$ is the entire space $Y$, but there is no linear map, bounded or not, on the linear span of $\varphi(X)$ which is a left inverse of $\varphi$. The construction is carried out using the $p$-Banach counterpart of Lipschitz-free spaces. In fact, $Y$ may be chosen isomorphic to the Lipschitz-free $p$-space over $X$. The main ingredient is a quantitative estimate controlling the distance to elementary molecules in Lipschitz-free $p$-spaces, a phenomenon specific to the range $0<p<1$. As a further consequence, we show that a real Banach space has the isometric $p$-lifting property if and only if its dimension is at most one.
\end{abstract}
\maketitle
\section{Introduction}\noindent
Every Banach space is, by its nature, both a linear and a metric space. One of the fundamental questions of nonlinear functional analysis is to understand how much of its linear structure is already encoded in the metric. Thus, if two Banach spaces are Lipschitz isomorphic, or uniformly homeomorphic, to what extent must their linear structures resemble one another? Similar questions arise for nonlinear embeddings, where one asks what linear information, if any, can be recovered from the metric behavior of the embedding. There are several versions of this problem, depending on how much metric information is assumed to be preserved. At one end of the scale one may ask only for a homeomorphism, while at the other one requires the exact preservation of distances. Between these two extremes lie the Lipschitz and uniform categories which have played a central role in the nonlinear geometry of Banach spaces.

The situation becomes considerably less transparent once local convexity is lost. Although the nonlinear theory of Banach spaces has developed into a substantial subject, comparatively little is known about the nonlinear geometry of quasi-Banach spaces and, more generally, of $\textsf{F}$-spaces. The classical Lipschitz isomorphism problem asks whether two separable Banach spaces which are Lipschitz isomorphic must be linearly isomorphic. This problem remains open in full generality, but the Banach-space theory is by no means devoid of positive results. A major source of such results is the differentiation theory of Lipschitz maps associated with Aronszajn \cite{Aronszajn1976}, Christensen \cite{Christensen1972} and Mankiewicz \cite{Mankiewicz1973}, and its subsequent use by Heinrich and Mankiewicz \cite{HM1982} in nonlinear classification. The basic principle is simple: differentiability of a Lipschitz map at a point produces a bounded linear operator which retains part of the metric information carried by the original map. This has become one of the fundamental linearization devices in nonlinear Banach space theory (see, for instance, \cites{BenLin2000,AlbiacKalton2016}).

Outside local convexity the picture is, in a sense, almost the reverse. In \cite{AlbiacKalton2009}, the first-named author and Kalton constructed separable quasi-Banach spaces which are Lipschitz isomorphic but not linearly isomorphic, so the corresponding Lipschitz isomorphism problem has a negative answer in general. On the other hand, there is no comparable general positive linearization theory. Recent work \cite{AlbiacAnsorenaWald2027} gives a rather striking reason for this. If a quasi-Banach space $X$ is not locally convex, then there is a Lipschitz map $ f\colon\RR\to X$ which is nowhere differentiable. In fact, the apparently much weaker requirement that every Lipschitz map $\RR\to X$ have merely one point of differentiability already forces $X$ to be locally convex and to have the Radon--Nikod\'ym property. Thus one of the principal linearization tools available in nonlinear Banach space theory disappears altogether in the genuinely nonlocally convex setting.

The counterexamples in \cite{AlbiacKalton2009} were based on a new construction, a $p$-analogue of the Arens--Eells space. This idea was later developed systematically into the theory of Lipschitz-free $p$-spaces over $p$-metric spaces \cite{AACD2020}. These spaces will also provide the natural framework for the counterexamples constructed in the present paper.

At the isometric end of the scale, however, the classical picture is far more rigid. The theorem of Mazur and Ulam \cite{MazurUlam1932} says that if $X$ and $Y$ are real Banach spaces and $U\colon X\to Y$ is a surjective isometry with $U(0)=0$, then $U$ is linear. Thus, in the real Banach-space setting, an onto isometry already remembers the linear structure. Rather surprisingly, local convexity is not needed for this conclusion: Rolewicz \cite{Rolewicz1968} proved the corresponding Mazur--Ulam theorem for real quasi-Banach spaces (cf.~\cite{Albiac2008}*{Corollary~2.3}).

Surjectivity, however, leaves open what happens for isometric embeddings. In 1968, Figiel showed that a remarkable part of the Mazur--Ulam rigidity survives even without it. We recall his theorem in the form that is most convenient here. Throughout this paper, the symbol $[Z]$ represents the closed linear span of a subset $Z$ of a quasi-Banach space.

\begin{theorem}[see \cite{Figiel1968}]\label{thm:Figiel}
Let $X$ and $Y$ be real Banach spaces and let $\varphi\colon X\to Y$ be an isometric embedding with $\varphi(0)=0$. Then, there is a linear contraction $T\colon [\varphi(X)] \to X$ such that $ T\circ \varphi=\Id_X$.
\end{theorem}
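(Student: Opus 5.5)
Let me sketch the classical route to Figiel's theorem. The plan is to construct, for each norm-one functional on $X$, a bounded linear functional on $[\varphi(X)]$ of norm at most one that recovers it along $\varphi$. Then I will assemble these functionals into the operator $T$.

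\emph{Reduction to one-dimensional targets.} Fix $x^*\in X^*$ with $\|x^*\|\le 1$. I will look for $y^*\in [\varphi(X)]^*$ with $\|y^*\|\le 1$ and $y^*\circ\varphi=x^*$. Suppose such a $y^*=y^*_{x^*}$ exists for each such $x^*$. It is then uniquely determined on $\spn\varphi(X)$ by the values $y^*(\varphi(x))=x^*(x)$, and hence, by continuity, on $[\varphi(X)]$.

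This uniqueness gives linearity in $x^*$. The map $x^*\mapsto y^*_{x^*}(y)$ is linear, because $ay^*_{x_1^*}+by^*_{x_2^*}$ agrees with $y^*_{ax_1^*+bx_2^*}$ on $\varphi(X)$ whenever both sides are defined. It also satisfies $|y^*_{x^*}(y)|\le\|x^*\|\,\|y\|$, which follows by scaling.

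From this I can define $T$. For $y\in\spn\varphi(X)$, say $y=\sum a_i\varphi(x_i)$, set $Ty=\sum a_i x_i$. This is well defined: if $\sum a_i\varphi(x_i)=0$, then every $x^*$ annihilates $\sum a_ix_i$, so $\sum a_ix_i=0$ by Hahn--Banach. Moreover $|x^*(Ty)|=|y^*_{x^*}(y)|\le\|y\|$ for all $x^*$ in the unit ball of $X^*$. So $T$ is a contraction on the span, with $T\varphi=\Id_X$, and it extends by continuity to $[\varphi(X)]$.

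\emph{The key step: existence of $y^*_{x^*}$.} It suffices to treat $\|x^*\|=1$. I would use the Mazur--Ulam-type geometry of lines. For $x_0$ with $x^*(x_0)$ close to $1$ and $\|x_0\|=1$, the line $t\mapsto tx_0$ is isometric. So $t\mapsto\varphi(tx_0)$ is an isometric embedding of $\RR$ into $Y$. The standard approach is to show that for a unit vector $u$, the limits
\[
\lim_{t\to\infty}\frac{\varphi(tu)}{t}
\]
make sense weakly, and to build $y^*$ as a weak$^*$ cluster point of norming functionals. Concretely, for $n\in\NN$ choose $f_n\in Y^*$ with $\|f_n\|=1$ and $f_n(\varphi(nx_0)-\varphi(-nx_0))=2n$. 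Then $f_n\circ\varphi$ is $1$-Lipschitz on the segment $[-nx_0,nx_0]$ and increases by exactly $2n$ along it, so it is affine with slope $1$ there: $f_n(\varphi(tx_0))=t+c_n$ for $|t|\le n$. Letting $f$ be a weak$^*$ cluster point gives $g=f\circ\varphi-f(\varphi(0))=f\circ\varphi$, which is $1$-Lipschitz on $X$ with $g(tx_0)=t$.

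\emph{Main obstacle.} I expect the hardest part to be upgrading this $1$-Lipschitz $g$ to an actual linear functional equal to $x^*$. The natural tool is the theorem that a $1$-Lipschitz function equal to the identity along a direction is linear when the norm is smooth there. More robustly, I would run the construction along a net of finite-dimensional subspaces and use a differentiation/averaging argument. Taking a Banach-limit $L$, set $\tilde g(x)=L_t\bigl(g(x+tx_0)-t\bigr)$ and iterate over many directions to force additivity.

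Figiel's original trick avoids this: he works directly with $\varphi$ restricted to finite-dimensional subspaces and uses Gâteaux-smooth points (dense by Mazur's theorem in separable subspaces). At a smooth point $x_0$ with support functional $x^*$, any $1$-Lipschitz $g$ with $g(tx_0)=t$ must be $\le x^*$-controlled from both sides, which forces $g=x^*$. Density of such $x^*$ in the unit sphere, together with the weak$^*$ closedness of the set of attainable $x^*$, then finishes the argument. This smooth-point rigidity is where I expect the real work to lie.
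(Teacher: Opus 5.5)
Your architecture is the classical dual argument the paper alludes to. The paper cites Theorem~\ref{thm:Figiel} without proof, recalling only that one produces $y^*$ with $y^*\circ\varphi=x^*$ and recovers $T$ from these functionals. Two of your steps are correct: the reduction to functionals, and the line argument giving $y^*$ of norm at most one with $y^*(\varphi(tx_0))=t$.

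The smooth-point rigidity you single out as the real work is actually immediate. For $t>0$,
$t(1-\|x_0-z/t\|)\le y^*(\varphi(z))\le t(\|x_0+z/t\|-1)$, and both bounds tend to $x_0^*(z)$.

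\textbf{The gap is in the last step.} The derivatives $x_0^*$ at smooth points are \emph{not} dense in the unit sphere of $X^*$. In $c_0$ they are exactly $\pm e_n^*$, since smoothness means $|x_n|$ peaks at a single coordinate, and their weak$^*$ closure only adds $0$. In $\ell_\infty^2$ there are just four of them. So weak$^*$-closedness of the set $K$ of attainable functionals does not give $y^*_{x^*}$ for every $x^*$ in the unit ball, which your reduction requires.

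What is missing is this: Mazur's theorem gives density of smooth points in $S_X$, and that only makes the set $D$ of their derivatives \emph{norming}. You then have two ways to finish.
\begin{enumerate}
\item Note that $K$ is also convex and symmetric (average the corresponding $y^*$'s). Together with weak$^*$-closedness, the bipolar theorem then forces $K$ to be the whole dual ball.
\item More simply, skip general $x^*$. For $x^*\in D$ you have $x^*(\sum_i a_ix_i)=y^*_{x^*}(\sum_i a_i\varphi(x_i))\le\|\sum_i a_i\varphi(x_i)\|$. Taking the supremum over $D$ gives $\|\sum_i a_ix_i\|\le\|\sum_i a_i\varphi(x_i)\|$, which yields well-definedness and contractivity of $T$ at once.
\end{enumerate}

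\textbf{A second point: separability.} Mazur's theorem needs separability, and smooth points may not exist at all in nonseparable spaces (e.g.\ $\ell_1(\Gamma)$ with $\Gamma$ uncountable). Your remark about nets of subspaces should become an explicit reduction. The last inequality involves only $F=\spn\{x_1,\dots,x_n\}$, so it suffices to apply the argument to $\varphi|_F$, where smooth points of the norm of $F$ are dense. The Banach-limit averaging is then not needed.
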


Thus an isometric copy of a Banach space inside another Banach space, although it need not itself be a linear subspace, always carries a linear ``shadow'' of the original space. The Mazur--Ulam theorem follows at once: if $\varphi$ is onto, the operator $T$ in Figiel's theorem is simply $\varphi^{-1}$, and hence $\varphi$ is linear. More generally, Theorem~\ref{thm:Figiel} has become a useful linearization principle in nonlinear Banach space theory. In particular, it enters naturally into the theory of Lipschitz-free Banach spaces and the theorem of Godefroy and Kalton that a separable Banach space which embeds isometrically into a Banach space $Y$ also embeds linearly and isometrically into $Y$ (see \cite{GodefroyKalton2003}).

There is, however, a strong local-convexity ingredient hidden in this second linearization principle as well. Figiel's proof is essentially dual in nature. One way to phrase its key step is that, for every $x^*\in X^*$, one finds $y^*\in Y^*$ with $y^*(\varphi(x))=x^*(x)$ for all $x\in X$, and the Figiel operator is then recovered from these functionals (see, e.g., \cite{AlbiacKalton2016}*{proof of Theorem 14.4.10}). Thus separation by continuous linear functionals, and ultimately the Hahn--Banach principle, are built into the proof. In a nonlocally convex quasi-Banach space the continuous dual may be too small for such an argument, or even trivial, and no obvious substitute is available.

This naturally raises the question whether Figiel's theorem itself survives without local convexity. Kalton posed this question to the first-named author in 2003, when the nonlinear geometry of quasi-Banach spaces was only beginning to take shape. The failure of the usual proof is, of course, not evidence that the theorem itself must fail: many Banach-space results whose classical proofs rely on separation or duality continue to hold in the quasi-Banach setting for quite different reasons. Here, however, the theorem itself breaks down.

To state our result, recall that for $0<p\le 1$ a $p$-Banach space is a complete quasi-normed space $X$ whose quasi-norm satisfies the $p$-triangle law
\[
\norm{x+y}^p\leq \norm{x}^p+\norm{y}^p, \qquad x,y\in X.
\]
Every Banach space is of course a $p$-Banach space for every $0<p\le 1$. Our main theorem shows that Figiel's conclusion fails in the strongest possible sense, even when the domain of the isometric embedding is kept within the Banach category.

\begin{theorem}\label{thm:main}
Let $0<p<1$ and let $X$ be a nonzero real Banach space. Then there are a real $p$-Banach space $Y$ and an isometric embedding
\[
\varphi\colon X\to Y
\]
such that $\varphi(0)=0$ and $[\varphi(X)]=Y$, but there is no linear map
\[
T\colon \spn(\varphi(X)) \to X
\]
such that $T\circ\varphi=\Id_X$. In fact, $Y$ can be chosen to be isomorphic to the Lipschitz-free $p$-space over $X$.
\end{theorem}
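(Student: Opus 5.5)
The natural candidate is the Lipschitz-free $p$-space $\F_p(X)$, where $X$ is viewed as a pointed metric space with base point $0$, together with the canonical map $\delta\colon X\to\F_p(X)$. This map is an isometric embedding with $\delta(0)=0$ and $[\delta(X)]=\F_p(X)$. The trouble is that a linear left inverse of $\delta$ does exist on $\spn(\delta(X))$: the barycenter map $\beta\colon\sum a_i\delta(x_i)\mapsto\sum a_i x_i$ is well defined, since the $\delta(x)$ with $x\ne0$ are linearly independent. It is merely unbounded when $p<1$. So $\delta$ itself is not a counterexample, and I will perturb it.

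The plan is to build $\varphi=\delta+\psi$, or more precisely a modification of $\delta$ inside a renorming of $\F_p(X)$, so that linear relations are forced among the points $\varphi(x)$ that are incompatible with any linear left inverse. Concretely, I would choose points $x_1,\dots,x_n$, or a sequence of them, together with coefficients $a_i$ satisfying $\sum a_i x_i\ne 0$, and arrange that $\sum a_i\varphi(x_i)=0$ in $Y$. Any linear $T$ with $T\circ\varphi=\Id_X$ would then give $0=\sum a_i x_i$, a contradiction. It suffices to enforce one such relation, for example $\varphi(2x_0)=2\varphi(x_0)+z$ with $z$ expressed through other $\varphi$-values. The cleanest way to create such a relation is to pass to a quotient. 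Let $Y=\F_p(X)/[u]$, or $\F_p(X)$ renormed, where $u=\sum a_i\delta(x_i)$ is a combination of molecules with nonzero barycenter, and set $\varphi=q\circ\delta$. Then $Y$ is still isomorphic to $\F_p(X)$, because $\F_p(X)$ contains complemented copies of $\ell_p$ and removing one dimension does not change the isomorphism class. Also $[\varphi(X)]=Y$ holds trivially, and linearity of $T$ fails on $\spn(\varphi(X))$ because $\sum a_i\varphi(x_i)=0$.

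The key requirement is that $q\circ\delta$ remain isometric. For all $x,y$ we need
\[
\inf_{t\in\RR}\norm{\delta(x)-\delta(y)-tu}_{\F_p(X)}=\norm{x-y},
\]
that is, the elementary molecule $\delta(x)-\delta(y)$ must be at distance exactly $\norm{x-y}$ from the line spanned by $u$. This is where the ``quantitative estimate controlling the distance to elementary molecules'' from the abstract enters, and it is the main obstacle. For $p=1$ it must fail, by Figiel's theorem. For $p<1$ I would first try $u=\delta(2e)-2\delta(e)$ for a unit vector $e$. Its barycenter is zero, which is useless, so I would instead take a $u$ whose barycenter is nonzero but which is ``spread out'', for instance
\[
u=\delta(e)-\frac1N\sum_{j=1}^{N}\delta(x_j),
\]
with points $x_j$ chosen far apart, possibly after first renorming $\F_p(X)$ by a $p$-sum with an auxiliary direction. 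The estimate to aim for is
\[
\norm{m-tu}^p\ge\norm{m}^p
\]
for every elementary molecule $m$ and every $t$. This can hold for $p<1$ because the $p$-norm rewards splitting: $\norm{u}^p$ behaves like $N^{1-p}$, which grows, so subtracting a multiple of $u$ never helps. I would prove the estimate by testing against $p$-Lipschitz functions on a suitable finite metric space, since the $\F_p$ norm of finitely supported elements is computed by optimal $p$-transport-type decompositions.

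If a single quotient does not work uniformly in $x,y$, I would build $Y$ as a $p$-sum $\F_p(X)\oplus_p\RR$ with $\varphi(x)=(\delta(x),f(x))$, where $f$ is a suitably small function designed to break the forced relation. Either way, the final verification is the same distance-to-molecules inequality.
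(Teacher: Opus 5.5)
Your reduction is the right one, and it is exactly the paper's Proposition~\ref{prop:quotient-obstruction}: quotient $\F_p(X)$ by the line spanned by a finitely supported $u$ with $\beta_{p,X}(u)\neq0$, and put $\varphi=q\circ\delta_{p,X}$. Everything then hinges on $d_{\F_p(X)}(m,[u])=1$ for every $m\in\Acal_{p,X}$. That estimate is the entire content of the theorem, and your proposal does not prove it.

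Your heuristic that $\norm{u}^p$ grows like $N^{1-p}$ only handles large $\abs{t}$, through $\norm{m-tu}^p\ge\abs{t}^p\norm{u}^p-1$. The delicate regime is small $\abs{t}$. Since $\beta_{p,X}(u)\neq0$, the molecule $m=\delta_{p,X}(v)$ with $v=\beta_{p,X}(u)/\norm{\beta_{p,X}(u)}$ satisfies $\norm{\beta_{p,X}(m-tu)}=1-t\norm{\beta_{p,X}(u)}<1$ for small $t>0$. So the obvious lower bound decays linearly in $t$. You must produce a compensating gain of order $\abs{t}^p$, uniform over \emph{all} elementary molecules, including those supported on or near the support of $u$.

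Lower bounds by testing need $1$-Lipschitz maps defined on all of $X$ with values in a $p$-Banach space. You do not say how one such map could norm an arbitrary molecule with constant exactly $1$ while also detecting the spread-out part of $u$; naively putting a norming map and bump maps side by side in a $p$-sum pushes the Lipschitz constant above $1$. Your specific $u=\delta(e)-\frac1N\sum_j\delta(x_j)$ is never verified, and nothing in its design controls its barycenter relative to the rest of $u$.

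The missing idea is the concentration estimate of Lemma~\ref{lem:concentration}: for $0<p<1$, if $\norm{\beta_{p,X}(w)}=1$ then $d_{\F_p(X)}(w,\Acal_{p,X})^p\le\frac{2}{1-p}\enpar{\norm{w}^p-1}$. It comes from a near-optimal atomic decomposition of $w$, where the inequality $\norm{\alpha}_1\le\max\{\varepsilon\norm{\alpha}_p,\varepsilon^{-p/(1-p)}\norm{\alpha}_\infty\}$ forces the largest coefficient to carry almost all the $p$-mass. With it, one takes $\rho=\gamma+\eta\,\delta_{p,X}(e)$, where $\beta_{p,X}(\gamma)=0$ and $c=\inf_{\mu,\nu\in\Acal_{p,X}}d_{\F_p(X)}(\gamma,[\mu-\nu])>0$. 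For $\mu\in\Acal_{p,X}$:
\begin{itemize}
\item the vector $\mu+t\gamma$ stays in the fiber $\norm{\beta_{p,X}(\cdot)}=1$ and lies at distance at least $c\abs{t}$ from $\Acal_{p,X}$;
\item hence $\norm{\mu+t\gamma}^p\ge1+\frac{1-p}{2}c^p\abs{t}^p$;
\item the perturbation costs only $\eta^p\abs{t}^p$, which is absorbed once $\eta<c((1-p)/2)^{1/p}$.
\end{itemize}
The separation needs no sharp constants. For $\gamma=\sum_{k=1}^4\delta_{p,X}(kLe)-2\delta_{p,X}(5Le)$, the linearization of a $1$-Lipschitz bump map $h\colon X\to\ell_p^5$ sends $\gamma$ to $(1,1,1,1,-2)$ and every $s(\mu-\nu)$ to a vector with at most four nonzero coordinates.

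Smaller points:
\begin{itemize}
\item $\beta_{p,X}$ is a contraction on $\F_p(X)$, since a Banach space is a $p$-Banach space; it is not unbounded.
\item $\F_p(X)/[u]\simeq\F_p(X)$ requires $[u]$ to be complemented. This is not automatic without local convexity, but here it follows from $x^*\circ\beta_{p,X}$ with $x^*(\beta_{p,X}(u))\neq0$.
\item Your fallback $\varphi(x)=(\delta(x),f(x))$ in $\F_p(X)\oplus_p\RR$ is isometric only if $f$ is constant.
\end{itemize}
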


Two features of Theorem~\ref{thm:main} are worth emphasizing. First, the failure is already algebraic: there is no linear left inverse even on the linear span of the range of $\varphi$, without any boundedness requirement. Second, the counterexample does not arise from a complicated extension: the target space is obtained from $X$ by standard functional constructions. In fact, a quotient of $\F_p(X)$ by a one-dimensional complemented subspace is enough to destroy Figiel's conclusion completely.

We next recall the few facts about Lipschitz-free $p$-spaces that are
needed to formulate the construction. We take the convention that any $p$-Banach $X$ is a pointed $p$-metric space with base point the zero vector and $p$-distance
\[
(x,y) \mapsto d_X(x,y):=\norm{x-y}, \qquad x,y\in X.
\]
In general, for any $p$-metric space $(\Mcal, d_{\Mcal})$ with a distinguished point $0\in \Mcal$, there is a $p$-Banach space $\F_p(\Mcal)$ and a canonical isometry
\[
\delta_{p,\Mcal}\colon \Mcal \to \F_p(\Mcal)
\]
that satisfy the natural universal property. Namely, for every $p$-Banach space $Z$ and every Lipschitz contraction $f\colon \Mcal \to Z$ with $f(0)=0$ there is a linear contraction $E[f;p]\colon \F_p(\Mcal) \to Z$ such that
\[
E[f;p] \circ \delta_{p,\Mcal} =f
\]
(see \cites{AlbiacKalton2009,AACD2020}). The family
\[
(\delta_{p,\Mcal}(x))_{x\in \Mcal\setminus\{0\}}
\]
is linearly independent, and its closed linear span is the entire space $\F_p(\Mcal)$. Furthermore, series expansions in terms of norm-one vectors of the form
\begin{equation}\label{eq:atom}
\mu_{p,\Mcal}(x,y)= \frac{\delta_{p,\Mcal}(x)-\delta_{p,\Mcal}(y)}{d_{\Mcal}(x,y)}, \qquad x,y \in \Mcal, \, x\neq y,
\end{equation}
encode the $p$-norm of $\F_p(\Mcal)$. Specifically, if we set
\begin{equation}\label{eq:atomset}
\Acal_{p,\Mcal}=\enbrace{\mu_{p,\Mcal}(x,y) \colon x,y\in \Mcal, \, x\not=y},
\end{equation}
then, for any $\mu\in \F_p(\Mcal)$,
\begin{equation}\label{eq:atomic-formula}
\norm{\mu}=\inf \enbrace{ \enpar{\sum_{j=1}^\infty \abs{a_j}^p}^{1/p}
\colon a_j\in\RR, \mu_j\in \Acal_{p,\Mcal}, \mu=\sum_{j=1}^\infty a_j\, \mu_j}
\end{equation}
(see \cite{AACD2020}*{Corollary 4.12}). We refer to the elements of $\Acal_{p,\Mcal}$ as
\emph{elementary molecules}.

If $X$ is a $p$-Banach space, we regard the identity map $\Id_X$ as a map from a $p$-metric space into a $p$-Banach space. So, the universal property of Lipschitz free $p$-spaces yields a linear contraction
\begin{equation}\label{eq:bary}
\beta_{p,X}:=E[\Id_X;p] \colon \F_p(X) \to X.
\end{equation}
The onto map $\beta_{p,X}$ will be called the $p$-\emph{barycentric map} of $X$.

The following elementary observation isolates the mechanism behind the counterexample.

\begin{proposition}\label{prop:quotient-obstruction}
Let $X$ be a nonzero $p$-Banach space, $0<p\le 1$. Suppose that $\rho\in\spn(\delta_{p,X}(X))$ satisfies $\beta_{p,X}(\rho)\neq0$ and $d_{\F_p(X)}(\mu,[\rho])=1$ for every $\mu\in\Acal_{p,X}$. Set $Y=\F_p(X)/[\rho]$, let $q\colon\F_p(X)\to Y$ be the canonical quotient map, and put $\varphi=q\circ\delta_{p,X}$. Then $\varphi\colon X\to Y$ is an isometric embedding with $\varphi(0)=0$ and $[\varphi(X)]=Y$, but there is no linear map
\[
T\colon\spn(\varphi(X))\to X
\]
such that $T\circ\varphi=\Id_X$.
\end{proposition}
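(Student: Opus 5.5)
The proof splits into three parts: the isometry, the density of the span, and the nonexistence of a linear left inverse.

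\emph{Isometry.} Fix $x\neq y$ in $X$. We have $\varphi(x)-\varphi(y)=d_X(x,y)\,q(\mu_{p,X}(x,y))$, and the quotient quasi-norm gives
\[
\norm{q(\mu)}_Y=d_{\F_p(X)}(\mu,[\rho]).
\]
By hypothesis this equals $1$ for every $\mu\in\Acal_{p,X}$. Hence $\norm{\varphi(x)-\varphi(y)}=\norm{x-y}$. Since $\delta_{p,X}(0)=0$, also $\varphi(0)=0$.

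\emph{Density.} The quotient map $q$ is continuous and onto. It sends $\spn(\delta_{p,X}(X))$, which is dense in $\F_p(X)$, onto $\spn(\varphi(X))$. Therefore $\spn(\varphi(X))$ is dense in $Y$, that is, $[\varphi(X)]=Y$.

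\emph{No linear left inverse.} Suppose a linear $T\colon\spn(\varphi(X))\to X$ satisfies $T\circ\varphi=\Id_X$. Consider the linear map
\[
S:=T\circ q|_{\spn(\delta_{p,X}(X))}\colon \spn(\delta_{p,X}(X))\to X.
\]
For each $x$ we have $S(\delta_{p,X}(x))=T(\varphi(x))=x$. Now $\beta_{p,X}$ restricted to $\spn(\delta_{p,X}(X))$ is also linear and sends $\delta_{p,X}(x)$ to $x$. The family $(\delta_{p,X}(x))_{x\neq0}$ is linearly independent and spans this space, so a linear map on the span is determined by its values on it. Hence $S=\beta_{p,X}$ on $\spn(\delta_{p,X}(X))$.

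Since $\rho$ lies in this span, we get $S(\rho)=\beta_{p,X}(\rho)\neq0$. On the other hand, $q(\rho)=0$, so $S(\rho)=T(0)=0$. This contradiction shows that no such $T$ exists.

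There is no real obstacle in this argument. The only points needing care are that $\rho$ lies in the algebraic span, which is assumed, and that $T$ is defined on $q(\spn\delta_{p,X}(X))=\spn(\varphi(X))$, which is immediate from linearity of $q$.
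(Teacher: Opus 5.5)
Your proof is correct and follows the paper's argument. The isometry and density steps are the same. Your map $S=T\circ q$, which agrees with $\beta_{p,X}$ on $\spn(\delta_{p,X}(X))$, repackages the paper's direct computation $0=T\bigl(\sum_j a_j\varphi(x_j)\bigr)=\sum_j a_jx_j=\beta_{p,X}(\rho)$ for $\rho=\sum_j a_j\delta_{p,X}(x_j)$. (Linear independence of the $\delta_{p,X}(x)$ is not needed there: two linear maps that agree on a spanning set already coincide.)
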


\begin{proof}
For distinct $x,y\in X$, the definition of the quotient $p$-norm gives
\begin{align*}
\norm{\varphi(x)-\varphi(y)}
&=\norm{x-y}\,
\norm{q(\mu_{p,X}(x,y))}\\
&=\norm{x-y}\,
d_{\F_p(X)}(\mu_{p,X}(x,y),[\rho])
=\norm{x-y}.
\end{align*}
Thus $\varphi$ is isometric, and clearly $\varphi(0)=0$. Moreover, since $\F_p(X)=[\delta_{p,X}(X)]$ and $q$ is continuous and onto,
\[
Y=q(\F_p(X))
\subseteq[q(\delta_{p,X}(X))]
=[\varphi(X)]
\subseteq Y.
\]
Hence $[\varphi(X)]=Y$.

Finally, write $\rho=\sum_{j=1}^n a_j\delta_{p,X}(x_j)$. Since $q(\rho)=0$,
\[
\sum_{j=1}^n a_j\varphi(x_j)=0,
\]
whereas
\[
\sum_{j=1}^n a_jx_j=\beta_{p,X}(\rho)\neq0.
\]
Thus a linear map $T\colon\spn(\varphi(X))\to X$ satisfying
$T\circ\varphi=\Id_X$ would give
\[
0=T\enpar{\sum_{j=1}^n a_j\varphi(x_j)}
=\sum_{j=1}^n a_jx_j
=\beta_{p,X}(\rho),
\]
a contradiction.
\end{proof}

Proposition~\ref{prop:quotient-obstruction} reduces the main part of the proof of Theorem~\ref{thm:main} to the construction of a finitely supported vector $\rho\in\F_p(X)$ such that $\beta_{p,X}(\rho)\neq0$ and $d_{\F_p(X)}(\mu,[\rho])=1$ for every $\mu\in\Acal_{p,X}$. In Section~\ref{sec:BVCL} we will see a concentration estimate that will reduce this task to a more concrete one: it suffices to find a finitely supported vector in $\ker\beta_{p,X}$ which is uniformly separated from the one-dimensional spaces $[\mu-\nu]$, where $\mu,\nu\in\Acal_{p,X}$. In Section~\ref{sec:transverse} we construct a vector $\gamma$ with the required separation property. Finally, in Section~\ref{sec:proof} we choose a small perturbation $\rho$ of $\gamma$ with $\beta_{p,X}(\rho)\neq0$ and show that the resulting quotient $\F_p(X)/[\rho]$ is isomorphic to $\F_p(X)$.

The concentration phenomenon to which we alluded above will have a further consequence that we will explore in Section~\ref{sec:isometric-p-liftings} by characterizing the Banach spaces with the isometric $p$-lifting property. Recall that, following the terminology introduced by Godefroy and Kalton \cite{GodefroyKalton2003} in the Banach case, and extended to the case $p\le 1$ in \cite{AlbiacKalton2009}, we say that a $p$-Banach space $X$ has the \emph{isometric $p$-lifting property} if the $p$-barycentric map $\beta_{p,X}$ admits a contractive linear lifting, that is, there is a contractive linear map
\[
R\colon X\to\F_p(X)
\]
such that $\beta_{p,X}\circ R=\Id_X$. While every separable Banach space has the isometric $1$-lifting property (see \cite{GodefroyKalton2003}*{Theorem 3.1}), we prove that a Banach space has the isometric $p$-lifting property for $p<1$ only if it is trivial or one-dimensional. This provides a further illustration of the sharp change which takes place at the locally convex endpoint $p=1$.

Our paper requires little background beyond the specific Lipschitz-free $p$-space facts needed in the proof of the main theorem, and the above paragraphs already say exactly which ingredients are used: the canonical embedding, density, universal property and elementary molecule description, all of which can be found in \cite{AACD2020}. Aside from that, we use standard notation and terminology from Banach space theory, as in \cite{AlbiacKalton2016}. For general background on quasi-Banach spaces and $\textsf{F}$-spaces we refer to \cite{KPR1984}. Throughout the paper all vector spaces are real, and an isometry is not assumed to be linear.
\section{Quantitative proximity to elementary molecules}\label{sec:BVCL}\noindent
Fix $0<p\le 1$ and a nonzero real $p$-Banach space $X$. Let $S_X$ denote the unit sphere of $X$. By \eqref{eq:atom} and \eqref{eq:bary},
\begin{equation}\label{tag2.1}
\beta_{p,X}\enpar{\mu_{p,X}(x,y)}
=\frac{x-y}{\norm{x-y}} \in S_X, \qquad x,y\in X, \, x\not=y.
\end{equation}

The aim of this section is to estimate quantitatively the distance from $\mu\in\F_p(X)$ to $\Acal_{p,X}$ under the assumption
that $\beta_{p,X}(\mu)\in S_X$. We first record an immediate lower bound. Since $\beta_{p,X}$ is contractive, $\|\mu\|\geq1$. Hence, for every $\nu\in\Acal_{p,X}$, the reverse $p$-triangle inequality gives
\[
\norm{\mu-\nu}^p \geq \norm{\mu}^p-\norm{\nu}^p = \norm{\mu}^p-1.
\]
Consequently,
\begin{equation*}
d_{\F_p(X)}\enpar{\mu,\Acal_{p,X}}^p
\geq \norm{\mu}^p-1.
\end{equation*}

When $X$ is a Banach space and $0<p<1$, the next lemma gives a converse estimate, up to a constant depending only on $p$. In particular, within the fiber $\{u\in \F_p(X): \|\beta_{p,X}(u)\|=1\}$, points whose norm is close to the smallest possible value $1$ must be close to $\Acal_{p,X}$. This concentration principle will drive the construction in the following sections. Its proof uses the elementary inequality
\begin{equation}\label{eq:sb}
\norm{\alpha}_1\le \max\enbrace{ \varepsilon\norm{\alpha}_p, \varepsilon^{-p/(1-p)} \norm{\alpha}_\infty}, \quad \varepsilon\in(0,1], \, \alpha\in\ell_p
\end{equation}
(see \cite{AAW2021}*{Lemma 3.2}). For $\varepsilon=1$, inequality \eqref{eq:sb} reduces to the contractive inclusion of $\ell_p$ into $\ell_1$, namely,
\begin{equation}\label{eq:lpl1}
\norm{\alpha}_1\le \norm{\alpha}_p, \qquad \alpha\in\ell_p.
\end{equation}
We shall also use the symmetry of $\Acal_{p,X}$, which follows immediately from the fact that
\[
\mu_{p,X}(y,x)=-\mu_{p,X}(x,y),
\qquad x,y\in X,\quad x\neq y.
\]

\begin{lemma}\label{lem:concentration}
Let $0<p<1$ and let $X$ be a Banach space. If $\mu\in\F_p(X)$ satisfies $\norm{\beta_{p,X}(\mu)}=1$, then
\begin{equation}\label{eq.2.98}
d_{\F_p(X)}\enpar{\mu,\Acal_{p,X}}^p
\leq\frac{2}{1-p} \enpar{\norm{\mu}^p-1}
\end{equation}
\end{lemma}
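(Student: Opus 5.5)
The plan is to take a near-optimal atomic representation of $\mu$ from \eqref{eq:atomic-formula} and compare it with the constraint $\norm{\beta_{p,X}(\mu)}=1$. Local convexity of $X$ should force one coefficient to carry almost all of the mass, and $\mu$ should then be close to the corresponding signed elementary molecule, which lies in $\Acal_{p,X}$ by symmetry.

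\textbf{Step 1 (representation).} Put $s=\norm{\mu}^p-1\ge 0$, which is nonnegative since $\beta_{p,X}$ is contractive. Fix $\eta>0$ and write $\mu=\sum_j a_j\mu_j$ with $\mu_j\in\Acal_{p,X}$ and $\sum_j\abs{a_j}^p\le 1+s+\eta$. By \eqref{tag2.1}, $\beta_{p,X}(\mu)=\sum_j a_ju_j$ with $u_j\in S_X$. Since $X$ is a Banach space, the ordinary triangle inequality gives $1\le\sum_j\abs{a_j}=\norm{\alpha}_1$, where $\alpha=(a_j)_j$.

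\textbf{Step 2 (one dominant coefficient).} Reorder so that $t:=\abs{a_1}=\norm{\alpha}_\infty$. First rule out the regime where $t$ is small, using \eqref{eq:sb}. Choose $\varepsilon$ slightly below $(1+s+\eta)^{-1/p}$, so that $\varepsilon\norm{\alpha}_p<1\le\norm{\alpha}_1$. The maximum in \eqref{eq:sb} is then attained by the second term, so $t\ge\varepsilon^{p/(1-p)}$, which is bounded below by a constant (and is close to $1$) once $s$ is small. For large $s$ the estimate \eqref{eq.2.98} is trivial: $d(\mu,\Acal_{p,X})^p\le\norm{\mu}^p+1=s+2$, and $s+2\le 2s/(1-p)$ once $s\ge 2$, say. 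So only small $s$ needs attention.

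Next, apply \eqref{eq:lpl1} to the tail. We have $\sum_{j\ge2}\abs{a_j}^p\ge\bigl(\sum_{j\ge2}\abs{a_j}\bigr)^p\ge(1-t)_+^p$, hence
\[
t^p+(1-t)_+^p\le 1+s+\eta .
\]
For $t\le 1$ we use $t^p\ge t$ to get $(1-t)^p-(1-t)\le s+\eta$. Since $t$ is bounded away from $0$, the quantity $1-t$ is bounded away from $1$. Then $(1-t)\le c\,(1-t)^p$ for some $c<1$, which yields $(1-t)^p\lesssim s/(1-p)$ with the right dependence on $p$.

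\textbf{Step 3 (distance estimate).} Take $\nu=\operatorname{sign}(a_1)\mu_1\in\Acal_{p,X}$. The $p$-triangle law gives
\[
\norm{\mu-\nu}^p\le \abs{t-1}^p+\sum_{j\ge2}\abs{a_j}^p\le \abs{t-1}^p+1+s+\eta-t^p .
\]
For $t\le1$ we have $1-t^p\le 1-t\le(1-t)^p$, so the right-hand side is at most $2(1-t)^p+s+\eta$. Inserting Step 2 and letting $\eta\to0$ gives a bound of the form $C_p\,s$. Tuning the constants should produce exactly $2/(1-p)$.

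\textbf{Main obstacle.} The delicate point is the case $t>1$. There $\abs{t-1}^p$ is only controlled like $s^p$, which is not linear in $s$. In that case the tail must be nontrivial, because $\norm{\beta_{p,X}(\mu)}=1<t$ requires cancellation. I would first try to exploit this: the tail has $\ell_1$-mass at least $t-1$, so its $\ell_p$-mass is at least $(t-1)^p$. This gives $t^p+(t-1)^p\le 1+s+\eta$, which again controls $(t-1)^p$ linearly by $s$, mirroring the case $t\le1$. If that fails, I would instead perturb the representation by absorbing part of the tail into the first term before estimating.
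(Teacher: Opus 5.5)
Your overall strategy matches the paper's: a near-optimal atomic representation, a dominant coefficient $t=\norm{\alpha}_\infty$ forced by \eqref{eq:sb}, and comparison with $\pm\mu_1$. Your first suggestion for the case $t>1$ is also correct. The bound $t-1\le\norm{\beta_{p,X}(\mu)-a_1u_1}\le\sum_{j\ge2}\abs{a_j}$ is exactly the paper's inequality \eqref{eq:l1largebis}.

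The gap is in the case $t\le1$. The lemma asserts the explicit constant $2/(1-p)$, and the chain you wrote down cannot deliver it. You bound $\norm{\mu-\nu}^p\le 2(1-t)^p+s+\eta$, and then $(1-t)^p\le(s+\eta)/(1-c)$ with $c=(1-t_0)^{1-p}\ge0$. Together these give $\norm{\mu-\nu}^p\le(s+\eta)\bigl(1+2/(1-c)\bigr)$, and $1+2/(1-c)\ge3$. Since $3>2/(1-p)$ whenever $p<1/3$, no choice of constants along this route yields \eqref{eq.2.98}. In the intermediate range of $s$ it is worse. For example, for $s$ near your threshold $2$ one has $t_0\approx 3^{-1/(1-p)}$, and then $1/(1-c)$ is much larger than $1/(1-p)$. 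So ``tuning the constants should produce exactly $2/(1-p)$'' is unsupported. What your argument actually proves is $d_{\F_p(X)}(\mu,\Acal_{p,X})^p\le C_p\,s$ for some unspecified $C_p$.

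The missing step is to drop the lossy detour $1-t^p\le1-t\le(1-t)^p$ and control $\abs{1-t}^p$ directly by the tail, in both cases at once. You already have $\abs{1-t}\le\sum_{j\ge2}\abs{a_j}$: from $\norm{\alpha}_1\ge1$ when $t\le1$, and from the reverse triangle inequality in $X$ when $t>1$. Then \eqref{eq:lpl1} gives $\abs{1-t}^p\le\sum_{j\ge2}\abs{a_j}^p$, and hence
\[
\norm{\mu-\nu}^p\le 2\sum_{j\ge2}\abs{a_j}^p\le 2\bigl(\tau-t^p\bigr),\qquad \tau:=1+s+\eta .
\]
Letting $\varepsilon\uparrow\tau^{-1/p}$ in your Step~2 gives $t^p\ge\tau^{-p/(1-p)}$. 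Therefore $\norm{\mu-\nu}^p\le2g(\tau)$ with $g(\tau)=\tau-\tau^{-p/(1-p)}$. Since $g(1)=0$ and $g'\le1/(1-p)$ on $[1,\infty)$, the mean value theorem gives $g(\tau)\le(s+\eta)/(1-p)$. Letting $\eta\to0$ yields exactly \eqref{eq.2.98}. This is the paper's argument, and it makes both your case distinction on $t$ and your separate treatment of large $s$ unnecessary.
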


\begin{proof}
Fix $C>\norm{\mu}\ge 1$. By~\eqref{eq:atomic-formula} and the symmetry of $\Acal_{p,X}$, there are numbers $a_j>0$ and elementary molecules $\mu_j\in\Acal_{p,X}$ such that
\[
\mu =\sum_{j=1}^{\infty}a_j\, \mu_j
\]
and
\begin{equation}\label{eq:near-optimal-decomposition}
\sum_{j=1}^{\infty}a_j^p< C^p.
\end{equation}
Set, for convenience, $\beta=\beta_{p,X}$. Since $\beta(\mu_j)\in S_X$ for all $j\in\NN$,
\begin{equation}\label{eq:l1large}
1=\norm{\beta(\mu)}=\norm{\sum_{j=1}^\infty a_j\, \beta(\mu_j)}\le \sum_{j=1}^\infty a_j.
\end{equation}
Similarly, for $k\in\NN$,
\begin{equation}\label{eq:l1largebis}\begin{split}
\abs{1-a_k}
&=\Big\vert \norm{\beta(\mu)}-a_k \norm{\beta(\mu_k)}\Big\vert \\
&\le \norm{\beta(\mu) -a_k\beta(\mu_k) }=\norm{\sum_{j\not=k} a_j\, \beta(\mu_j)}\le \sum_{j\not=k}^\infty a_j.
\end{split}\end{equation}

By \eqref{eq:near-optimal-decomposition} and \eqref{eq:l1large}, applying \eqref{eq:sb} with $\varepsilon=C^{-1}$ yields
\[
1\leq \sum_{j=1}^\infty a_j \le C^{p/(1-p)} \sup_{j\in\NN} a_j.
\]
Since $\lim a_j=0$, there is $k\in\NN$ such that $a_k\ge a_j$ for all $j\in\NN$. The expansion
\[
\mu-\mu_k=(a_k-1)\mu_k +\sum_{j\not=k} a_j \mu_j
\]
and inequalities \eqref{eq:l1largebis} and \eqref{eq:lpl1} yield
\begin{align*}
\norm{\mu-\mu_k}^p &\le \abs{1-a_k}^p + \sum_{j\not=k} a_j^p
\le\enpar{\sum_{j\not=k} a_j}^p+ \sum_{j\not=k} a_j^p\\
&\le 2 \sum_{j\not=k} a_j^p \le 2 \enpar{C^p -a_k^p}
\le 2 \enpar{C^p - C^{-p^2/(1-p)}}.
\end{align*}
Letting $C$ tend to $\norm{\mu}$ gives
\begin{equation}\label{eq:2.99}
d_{\F_p(X)}(\mu,\Acal_{p,X})^p \le 2 g(\norm{\mu}^p),
\end{equation}
where
\[
g(t)=t-t^{-p/(1-p)},\qquad t\ge 1.
\]
Since $g(1)=0$, and for $t\ge 1$,
\[
g'(t)
=1+\frac{p}{1-p}t^{-1/(1-p)}
\leq 1+ \frac{p}{1-p} = \frac{1}{1-p},
\]
the mean value theorem yields $g(t) \le (t-1)/(1-p)$. Combining this inequality with \eqref{eq:2.99} gives \eqref{eq.2.98}.
\end{proof}

The next proposition makes precise how the estimate above enters the construction required in Proposition~\ref{prop:quotient-obstruction}. It shows that once a finitely supported vector $\gamma\in\ker\beta_{p,X}$ satisfies an appropriate uniform separation property, every sufficiently small perturbation of $\gamma$ has the distance property required there. Finding such a vector $\gamma$ will be the main task of the next section.

\begin{proposition}\label{prop:stable-kernel-direction}
Let $0<p<1$, let $X$ be a Banach space, and let $\gamma\in\spn(\delta_{p,X}(X))$ satisfy $\beta_{p,X}(\gamma)=0$ and
\[
c:=\inf_{\mu,\nu\in\Acal_{p,X}}
d_{\F_p(X)}\enpar{\gamma,[\mu-\nu]}>0.
\]
Then there exists $\varepsilon>0$ such that, whenever $\rho\in\F_p(X)$ satisfies $\norm{\rho-\gamma}<\varepsilon$, one has
\[
d_{\F_p(X)}\enpar{\mu,[\rho]}=1
\qquad\text{for every }\mu\in\Acal_{p,X}.
\]
In fact, one may take
$\varepsilon=c((1-p)/2)^{1/p}$.
\end{proposition}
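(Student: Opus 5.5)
The plan is to prove the two inequalities $d_{\F_p(X)}(\mu,[\rho])\le 1$ and $d_{\F_p(X)}(\mu,[\rho])\ge 1$ separately. The upper bound is immediate: $0\in[\rho]$ and every elementary molecule has norm one, so the distance is at most $\norm{\mu}=1$. The lower bound is where Lemma~\ref{lem:concentration} enters. Since $[\rho]=\{t\rho\colon t\in\RR\}$, it suffices to show that
\[
\norm{\mu-t\rho}^p\ge 1 \qquad\text{for all } t\in\RR \text{ and } \mu\in\Acal_{p,X}.
\]
The case $t=0$ is trivial, so we assume $t\neq 0$.

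\emph{Step 1: bound $\norm{\mu-t\gamma}$ from below.} Fix $\mu\in\Acal_{p,X}$ and $t\ne0$. Since $\beta_{p,X}(\gamma)=0$, we have $\beta_{p,X}(\mu-t\gamma)=\beta_{p,X}(\mu)\in S_X$ by \eqref{tag2.1}. Hence Lemma~\ref{lem:concentration} applies to $\mu-t\gamma$ and gives
\[
\norm{\mu-t\gamma}^p\ge 1+\frac{1-p}{2}\, d_{\F_p(X)}\enpar{\mu-t\gamma,\Acal_{p,X}}^p .
\]
To bound the distance on the right, take any $\nu\in\Acal_{p,X}$ and write
\[
\mu-t\gamma-\nu=-t\enpar{\gamma-t^{-1}(\mu-\nu)}.
\]
The vector $t^{-1}(\mu-\nu)$ lies in $[\mu-\nu]$, so $\norm{\mu-t\gamma-\nu}\ge \abs{t}\,d_{\F_p(X)}(\gamma,[\mu-\nu])\ge \abs{t}c$. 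This includes $\nu=\mu$, where $[\mu-\nu]=\{0\}$ and the bound reads $\norm{\gamma}\ge c$. Consequently
\[
\norm{\mu-t\gamma}^p\ge 1+\frac{1-p}{2}\,c^p\abs{t}^p .
\]

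\emph{Step 2: perturb from $\gamma$ to $\rho$.} By the reverse $p$-triangle inequality,
\[
\norm{\mu-t\rho}^p\ge\norm{\mu-t\gamma}^p-\abs{t}^p\norm{\rho-\gamma}^p
\ge 1+\abs{t}^p\enpar{\frac{1-p}{2}c^p-\norm{\rho-\gamma}^p}.
\]
The bracket is nonnegative precisely when $\norm{\rho-\gamma}\le c((1-p)/2)^{1/p}$. In particular it is nonnegative under the hypothesis $\norm{\rho-\gamma}<\varepsilon$ with $\varepsilon$ as in the statement, and then $\norm{\mu-t\rho}\ge1$ for every $t$.

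\emph{Main point.} There is no serious obstacle; the step needing care is Step 1. The key observation is that subtracting a multiple of the kernel vector $\gamma$ keeps $\mu-t\gamma$ in the fiber $\{\norm{\beta_{p,X}(\cdot)}=1\}$ where Lemma~\ref{lem:concentration} is available. The separation hypothesis, rescaled by $t$, then turns this into a gain proportional to $\abs{t}^p$. That gain is exactly what absorbs the error $\abs{t}^p\norm{\rho-\gamma}^p$, uniformly in $t$.
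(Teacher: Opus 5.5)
Your proposal is correct and follows essentially the same route as the paper: use $\beta_{p,X}(\gamma)=0$ to keep $\mu-t\gamma$ in the fiber where Lemma~\ref{lem:concentration} applies, and combine it with the rescaled separation bound to get $\norm{\mu-t\gamma}^p\ge 1+\frac{1-p}{2}c^p\abs{t}^p$. Then absorb the perturbation $\abs{t}^p\norm{\rho-\gamma}^p$ via the reverse $p$-triangle inequality. Your remark that the case $\nu=\mu$ is covered by the infimum, and that the argument even works with $\le\varepsilon$, are harmless refinements of the same argument.
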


\begin{proof}
Fix $\mu\in\Acal_{p,X}$. For $t\neq0$ and $\nu\in\Acal_{p,X}$,
\[
\norm{\mu+t\gamma-\nu}
=\abs{t}\norm{\gamma+t^{-1}(\mu-\nu)}
\geq c\abs{t},
\]
and hence $d_{\F_p(X)}(\mu+t\gamma,\Acal_{p,X})\geq c\abs{t}$. Moreover, since $\beta_{p,X}(\gamma)=0$, we have $\norm{\beta_{p,X}(\mu+t\gamma)}=1$. Hence Lemma~\ref{lem:concentration} yields
\[
d_{\F_p(X)}\enpar{\mu+t\gamma,\Acal_{p,X}}^p
\leq \frac{2}{1-p}
\enpar{\norm{\mu+t\gamma}^p-1}.
\]
Combining this with
$d_{\F_p(X)}(\mu+t\gamma,\Acal_{p,X})\geq c\abs{t}$, we obtain
\[
c^p\abs{t}^p
\leq \frac{2}{1-p} \enpar{\norm{\mu+t\gamma}^p-1},
\]
and therefore
\[
\norm{\mu+t\gamma}^p
\geq 1+\frac{1-p}{2}c^p\abs{t}^p,
\qquad t\in\RR.
\]

Let $\varepsilon=c((1-p)/2)^{1/p}$ and suppose that $\norm{\rho-\gamma}<\varepsilon$. By the reverse $p$-triangle
inequality, for every $t\in\RR$,
\begin{align*}
\norm{\mu+t\rho}^p
&\geq \norm{\mu+t\gamma}^p -\abs{t}^p\norm{\rho-\gamma}^p\\
&\geq 1+
\enpar{\frac{1-p}{2}c^p-\norm{\rho-\gamma}^p}\abs{t}^p
\geq1.
\end{align*}
Thus $d_{\F_p(X)}(\mu,[\rho])\geq1$, while the reverse inequality
follows by taking $t=0$. Since $\mu\in\Acal_{p,X}$ was arbitrary,
the conclusion follows.
\end{proof}
\section{Uniform separation from differences of elementary molecules}\label{sec:transverse}\noindent
Let $0<p\leq1$ and let $X$ be a Banach space. We shall construct a vector in $\F_p(X)$ which stays uniformly away from every line through the origin generated by a difference of two elementary molecules.

Choose $e\in S_X$ and
\[
L\geq 2+2^{1/p},
\]
and define
\begin{equation}\label{eq:z0}
\gamma=\delta_{p,X}(Le)+\delta_{p,X}(2Le)+\delta_{p,X}(3Le)+\delta_{p,X}(4Le)-2\delta_{p,X}(5Le).
\end{equation}
Notice that
\begin{equation}\label{tag3.7}
\beta_{p,X}(\gamma)=Le+2Le+3Le+4Le-2(5Le)=0.
\end{equation}

To establish the required separation property, we construct a suitable Lipschitz map. For $k=1,\ldots,5$, define
\[
h_k(x)=\max\enbrace{0,1-\norm{x-kLe}}, \qquad x\in X.
\]
Each function $h_k$ takes values in $[0,1]$ and is $1$-Lipschitz. Moreover, if
\[
U_k=\enbrace{ x\in X \colon \norm{x-kLe}<1}, \qquad k=1,\ldots,5,
\]
then $h_k(x)\neq0$ precisely when $x\in U_k$. Put
\begin{equation}\label{tag3.1}
h\colon X\to\ell_p^5, \qquad h(x)=(h_j(x))_{j=1}^5.
\end{equation}

\begin{lemma}\label{lem:h-lipschitz}
Let $X$ be a Banach space, $0<p\le 1$, $e\in S_X$ and $L\ge 2+2^{1/p}$. Then the map $h$ is $1$-Lipschitz. Moreover, if $(e_k)_{k=1}^5$ denotes the canonical basis of $\ell_p^5$,
\begin{enumerate}[label=(\roman*),leftmargin=*,widest=ii]
\item\label{it:h2} $h(0)=0$, $h(kLe)=e_k$ for $k=1$, \dots $5$, and
\item\label{it:h1} $h(x)$ has at most one nonzero coordinate for all $x\in X$.
\end{enumerate}
\end{lemma}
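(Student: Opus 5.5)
We first dispose of the two listed properties, since the Lipschitz estimate will rely on them. For \ref{it:h2}: $\norm{0-kLe}=kL\ge L>1$, so $h_k(0)=0$ for every $k$. Also $h_k(kLe)=1$, and for $j\neq k$ we have $\norm{kLe-jLe}=\abs{k-j}L\ge L>1$, so $h_j(kLe)=0$; hence $h(kLe)=e_k$. For \ref{it:h1}: it suffices to see that the balls $U_k$ are pairwise disjoint. If $x\in U_j\cap U_k$ with $j\ne k$, the triangle inequality in $X$ gives $L\le\abs{j-k}L=\norm{jLe-kLe}<2$, which contradicts $L\ge 2+2^{1/p}$.

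For the Lipschitz bound, fix $x,y\in X$. By \ref{it:h1}, each of $h(x)$ and $h(y)$ has at most one nonzero coordinate. The first case is when both nonzero coordinates sit at the same index $k$, or one of the vectors vanishes. Then $h(x)-h(y)$ has at most one nonzero coordinate $h_k(x)-h_k(y)$ (for the appropriate $k$), so $\norm{h(x)-h(y)}_p=\abs{h_k(x)-h_k(y)}\le\norm{x-y}$ because $h_k$ is $1$-Lipschitz. The remaining case is $x\in U_j$ and $y\in U_k$ with $j\neq k$. Then
\[
\norm{h(x)-h(y)}_p=\enpar{h_j(x)^p+h_k(y)^p}^{1/p}\le 2^{1/p},
\]
since each $h_i$ takes values in $[0,1]$. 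On the other side,
\[
\norm{x-y}\ge\norm{jLe-kLe}-\norm{x-jLe}-\norm{y-kLe}> L-2\ge 2^{1/p}.
\]
Combining the two bounds gives $\norm{h(x)-h(y)}_p\le\norm{x-y}$.

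The only step needing care is this last case, where the $\ell_p$ norm of a vector with two nonzero coordinates can reach $2^{1/p}$. The choice $L\ge 2+2^{1/p}$ is exactly what makes the separation $L-2$ of distinct balls dominate this bound.
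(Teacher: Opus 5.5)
Your proof is correct and follows essentially the same route as the paper: you verify \ref{it:h2} directly from the definitions, get \ref{it:h1} from the pairwise disjointness of the balls $U_k$ via the triangle inequality, and split the Lipschitz estimate into the single-coordinate case (using that each $h_k$ is $1$-Lipschitz) and the two-distinct-balls case, where $\norm{h(x)-h(y)}_p\le 2^{1/p}\le L-2<\norm{x-y}$. The paper uses exactly this case analysis and the same separation estimate.
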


\begin{proof}
Since $L\geq1$, assertion \ref{it:h2} follows immediately from the definition of the functions $h_k$.

If $x\in U_k$ and $y\in U_j$ with $k\neq j$, then
\begin{equation}\label{tag3.2}\begin{split}
\norm{x-y}
&\geq \norm{(k-j)Le}-\norm{x-kLe}-\norm{y-jLe} \\
&> \abs{k-j}L-2
\geq L-2.
\end{split}\end{equation}
In particular, since $L>2$, taking $x=y$ shows that the sets $U_k$ are pairwise disjoint. Thus \ref{it:h1} holds.

Fix $x$, $y\in X$. Suppose that there are distinct $k$, $j\in\{1,\ldots,5\}$ such that $x\in U_k$ and $y\in U_j$. By \eqref{tag3.2},
\[
\norm{x-y}>L-2\geq2^{1/p}.
\]
Since $0\leq h_k(x)$, $h_j(y)\leq1$ and all the remaining coordinates vanish,
\[
\norm{h(x)-h(y)}_p^p
= h_k(x)^p+h_j(y)^p
\leq 2.
\]
Hence, $\norm{h(x)-h(y)}_p < \norm{x-y}$.

It remains to consider the case in which there is $k\in\{1,\ldots,5\}$ such that $h_j(x)=h_j(y)=0$ for all $j\in\{1,\dots,5\}$ with $j\not=k$. Since $h_k$ is $1$-Lipschitz,
\[
\norm{h(x)-h(y)}_p
=\abs{h_k(x)-h_k(y)}
\leq\norm{x-y}.
\]
Thus $h$ is $1$-Lipschitz.
\end{proof}

\begin{lemma}\label{lem:transverse}
Let $X$ be a Banach space, $0<p\le 1$, $e\in S_X$ and $L\ge 2+2^{1/p}$. Let $\gamma\in\F_p(X)$ be the vector defined in \eqref{eq:z0}. Then, for all $\mu$, $\nu\in\Acal_{p,X}$, and all $c\in\RR$,
\[
\norm{\gamma-c(\mu-\nu)}\geq1.
\]
\end{lemma}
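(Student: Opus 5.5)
The plan is to push everything forward to the finite-dimensional space $\ell_p^5$ using the Lipschitz map $h$ of Lemma~\ref{lem:h-lipschitz}, and then argue by counting supports. Since $h$ is a $1$-Lipschitz map with $h(0)=0$ into the $p$-Banach space $\ell_p^5$, the universal property of $\F_p(X)$ gives a linear contraction $E=E[h;p]\colon\F_p(X)\to\ell_p^5$ with $E\circ\delta_{p,X}=h$. Contractivity then yields
\[
\norm{\gamma-c(\mu-\nu)}\geq \norm{E(\gamma)-c\,E(\mu-\nu)}_p,
\]
so it suffices to bound the right-hand side from below by $1$.

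First I compute $E(\gamma)$. By Lemma~\ref{lem:h-lipschitz}\ref{it:h2} and \eqref{eq:z0},
\[
E(\gamma)=e_1+e_2+e_3+e_4-2e_5 .
\]
Every coordinate of this vector has absolute value at least $1$.

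Next I bound the support of $E(\mu-\nu)$. Write $\mu=\mu_{p,X}(x,y)$. Then
\[
E(\mu)=\frac{h(x)-h(y)}{\norm{x-y}}.
\]
By Lemma~\ref{lem:h-lipschitz}\ref{it:h1}, each of $h(x)$ and $h(y)$ has at most one nonzero coordinate, so $E(\mu)$ has at most two nonzero coordinates. The same holds for $E(\nu)$. Hence $c\,E(\mu-\nu)$ is supported on at most four of the five coordinates, for every $c\in\RR$.

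Finally I pick an index $k\in\{1,\dots,5\}$ outside the support of $c\,E(\mu-\nu)$. The $k$-th coordinate of $E(\gamma)-c\,E(\mu-\nu)$ is then the $k$-th coordinate of $E(\gamma)$, whose absolute value is $1$ or $2$. The $\ell_p$ quasi-norm dominates the absolute value of any single coordinate, so
\[
\norm{E(\gamma)-c\,E(\mu-\nu)}_p\geq1,
\]
which proves the lemma. There is no real obstacle: the substantive input, namely that $h$ is $1$-Lipschitz into $\ell_p^5$ with disjointly supported values, is already contained in Lemma~\ref{lem:h-lipschitz}. The only point to check carefully is that each elementary molecule is sent to a vector with at most two nonzero coordinates, which leaves at least one of the five coordinates of $E(\gamma)$ untouched.
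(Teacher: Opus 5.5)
Your proposal is correct and follows the paper's proof essentially step for step. Both push $\gamma-c(\mu-\nu)$ forward under the contraction $E[h;p]\colon\F_p(X)\to\ell_p^5$ and compute the image of $\gamma$ as $(1,1,1,1,-2)$. Both then use property (ii) of Lemma~\ref{lem:h-lipschitz} to see that $c\,E[h;p](\mu-\nu)$ has at most four nonzero coordinates, so some coordinate of modulus at least $1$ survives and gives the bound.
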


\begin{proof}
Let $h$ be the map defined in \eqref{tag3.1}. By Lemma~\ref{lem:h-lipschitz} and the universal property of Lipschitz-free $p$-spaces, there is a linear contraction
\[
P:=E[h;p] \colon \F_p(X) \to \ell_p^5
\]
such that $P(\delta_{p,X}(x))=h(x)$ for all $x\in X$. Set
\[
(a_k)_{k=1}^5=P\enpar{\gamma-c(\mu-\nu)}=P(\gamma)-P(c(\mu-\nu)).
\]
By Lemma~\ref{lem:h-lipschitz}\ref{it:h1}, the vector $P(c(\mu-\nu))$ has at most four nonzero coordinates, whereas by Lemma~\ref{lem:h-lipschitz}\ref{it:h2},
\begin{equation*}
P(\gamma)=(1,1,1,1,-2).
\end{equation*}
Hence at least one coordinate $a_k$ satisfies $|a_k|\geq1 $. Consequently,
\[
1\le \norm{P\enpar{\gamma-c(\mu-\nu)}}\le \norm{\gamma-c(\mu-\nu)}.\qedhere
\]
\end{proof}
\section{The quotient counterexample}\label{sec:proof}\noindent
We now combine the preceding ingredients to prove Theorem~\ref{thm:main}. Proposition~\ref{prop:quotient-obstruction} already provides the required counterexample once a suitable vector $\rho$ has been found. To obtain the additional conclusion that the quotient space is isomorphic to $\F_p(X)$, we shall use the following elementary fact about hyperplanes (i.e., closed subspaces of co-dimension one) in quasi Banach spaces.

\begin{lemma}\label{lem:hyper}
Let $Y$ be a quasi-Banach space and $y\in Y$. Assume that there is $y^*\in Y^*$ such that $y^*(y)\not=0$. Then,
\begin{enumerate}[label=(\roman*),leftmargin=*,widest=ii]
\item\label{it:hyper:1} $Y/[y]$ is isomorphic to a hyperplane of $Y$, and
\item\label{it:hyper:2} all hyperplanes of $Y$ are isomorphic.
\end{enumerate}
\end{lemma}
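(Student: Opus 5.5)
The plan is to use the continuous functional $y^*$ to split $Y$ as a topological direct sum and to compare everything with the kernel $H_0=\ker y^*$.

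\emph{Part \ref{it:hyper:1}.} Since $y^*(y)\neq0$, the subspace $H_0=\ker y^*$ is closed, because $y^*$ is continuous. It has codimension one, because $y^*\neq0$. So $H_0$ is a hyperplane. Moreover $Y=H_0\oplus[y]$ algebraically, via
\[
z=\Bigl(z-\tfrac{y^*(z)}{y^*(y)}\,y\Bigr)+\tfrac{y^*(z)}{y^*(y)}\,y .
\]
The projection $P z=z-\frac{y^*(z)}{y^*(y)}y$ onto $H_0$ along $[y]$ is bounded, since it is the identity minus a bounded rank-one operator. It vanishes on $[y]$, so it factors through the quotient map $q\colon Y\to Y/[y]$ as $P=\tilde P\circ q$, where $\tilde P\colon Y/[y]\to H_0$ is a bounded linear bijection. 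Its inverse is $q|_{H_0}$, which is bounded. Hence $Y/[y]\simeq H_0$, and no open mapping theorem is needed.

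\emph{Part \ref{it:hyper:2}.} Let $H$ be any hyperplane of $Y$. I would show $H\simeq H_0$. There are two cases.

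If $y\notin H$, then $H$ is closed of codimension one and $[y]\cap H=\{0\}$, so $Y=H\oplus[y]$ algebraically. The key claim is that the associated functional $\psi$, defined by $z-\psi(z)y\in H$, is continuous. The map $Y/H\to\RR$ is a linear bijection between Hausdorff one-dimensional topological vector spaces, since $H$ is closed, hence a homeomorphism. So $\psi$ is continuous. The two projections then show $H\simeq Y/[y]$, exactly as in part \ref{it:hyper:1}, and $Y/[y]\simeq H_0$ by part \ref{it:hyper:1}.

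If $y\in H$, take $u\notin H$ and set $w=y+u$. Then $w\notin H$. Since $y^*(w)=y^*(y)+y^*(u)$, we can replace $u$ by $-u$ or by a scalar multiple of $u$ to ensure $y^*(w)\neq0$; indeed $y^*(y+tu)\neq0$ for all but at most one $t\neq0$. Applying part \ref{it:hyper:1} and the previous case to $w$ gives $H\simeq Y/[w]\simeq\ker y^*=H_0$.

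Alternatively, and more cleanly: for any two hyperplanes $H_1,H_2$, pick a vector $w\notin H_1\cup H_2$, which exists since a vector space is not a union of two proper subspaces. Then $H_1\simeq Y/[w]\simeq H_2$, because each complement map is continuous by the one-dimensional argument above. This also gives part \ref{it:hyper:1} directly.

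The only subtle step is the continuity of the projection along $[w]$ onto a closed hyperplane in a non-locally convex space. It is handled by the fact that the quotient $Y/H$ is a one-dimensional Hausdorff topological vector space and therefore isomorphic to $\RR$. The hypothesis on $y^*$ is used only to make $H_0$ a genuine closed hyperplane with $y\notin H_0$, which guarantees that hyperplanes exist at all.
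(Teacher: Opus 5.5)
Your proof is correct. Part (i) is the same as in the paper: the continuous projection $z\mapsto z-\frac{y^*(z)}{y^*(y)}\,y$ onto $\ker y^*$ along $[y]$.

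For part (ii) you take a different route. The paper writes the second hyperplane as $M=\ker f$ with $f\in Y^*$ and passes through the common codimension-two subspace $M\cap H$. Since $y^*|_M$ and $f|_H$ are nonzero functionals with the same kernel $M\cap H$, it gets $M\simeq (M\cap H)\oplus\RR\simeq H$.

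You instead pass through a common complementary line. For a closed hyperplane $H$ and any $w\notin H$, the one-dimensional Hausdorff quotient $Y/H$ makes the projection along $[w]$ continuous, so $H\simeq Y/[w]$. Choosing $w\notin H_1\cup H_2$ then gives $H_1\simeq Y/[w]\simeq H_2$.

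What your version buys:
\begin{itemize}
\item It reduces both parts to a single principle and treats the two hyperplanes symmetrically.
\item It shows that (ii) does not depend on $y^*$ at all.
\item It proves explicitly that a closed hyperplane of a quasi-Banach space is the kernel of a continuous functional. The paper uses this without comment when it writes $M=\ker(f)$.
\end{itemize}

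The paper's version avoids quotients in (ii). It works entirely with kernels of functionals and the splitting $Z=\ker(f)\oplus[z]$, applied once inside $M$ and once inside $H$.

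Your case analysis ($y\in H$ versus $y\notin H$) is subsumed by your ``alternative'' argument, which is the cleaner one to write up.
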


\begin{proof}
We first recall that if $Z$ is a quasi-Banach space, $f\in Z^*$ is nonzero, and $z\in Z$ satisfies $f(z)=1$, then $Z=\ker(f)\oplus [z]$. Indeed, the corresponding projection onto $\ker(f)$ is given by $x\mapsto x-f(x)z$, and is continuous.

Put $H=\ker(y^*)$. Since $y^*(y)\neq0$, the preceding observation, applied with $z=y/y^*(y)$, gives $Y=H\oplus[y]$. Hence the quotient $Y/[y]$ is naturally isomorphic to $H$, proving~\ref{it:hyper:1}.

To prove~\ref{it:hyper:2}, let $M$ be any hyperplane of $Y$. There is nothing to prove if $M=H$, so assume that $M\neq H$, and write $M=\ker(f)$ for some nonzero $f\in Y^*$. Then both $\restr{y^*}{M}$ and $\restr{f}{H}$ are nonzero; otherwise one of the hyperplanes $M,H$ would be contained in the other, and hence they would coincide. Moreover,
\[
\ker\enpar{\restr{y^*}{M}}
=M\cap H
=\ker\enpar{\restr{f}{H}}.
\]
Applying the preceding observation first in $M$ and then in $H$, we obtain
\[
M\simeq (M\cap H)\oplus\RR
\simeq H.
\]
Thus every hyperplane of $Y$ is isomorphic to $H$, and consequently all hyperplanes of $Y$ are mutually isomorphic.
\end{proof}

\begin{proof}[Proof of Theorem~\ref{thm:main}]
Choose $e\in S_X$ and $L\geq 2+2^{1/p}$, and let $\gamma$ be defined by \eqref{eq:z0}. By Lemma~\ref{lem:transverse},
\[
c:=\inf_{\mu,\nu\in\Acal_{p,X}}
d_{\F_p(X)}\enpar{\gamma,[\mu-\nu]}\geq 1,
\]
while $\beta_{p,X}(\gamma)=0$. Set
\[
\rho=\gamma+\enpar{\frac{1-p}{4}}^{1/p}\delta_{p,X}(e).
\]
Then
\[
\norm{\rho-\gamma}
=\enpar{\frac{1-p}{4}}^{1/p}
<c\enpar{\frac{1-p}{2}}^{1/p},
\]
and hence Proposition~\ref{prop:stable-kernel-direction} gives
$d_{\F_p(X)}(\mu,[\rho])=1$ for every $\mu\in\Acal_{p,X}$.

Moreover, $\rho\in\spn(\delta_{p,X}(X))$ and
\[
\beta_{p,X}(\rho)
=\enpar{\frac{1-p}{4}}^{1/p}e\neq0.
\]
Therefore Proposition~\ref{prop:quotient-obstruction}, applied to $Y=\F_p(X)/[\rho]$, yields an isometric embedding $\varphi\colon X\to Y$ with $\varphi(0)=0$ and $[\varphi(X)]=Y$ such that there is no linear map $T\colon\spn(\varphi(X))\to X$ satisfying $T\circ\varphi=\Id_X$.

It remains to prove that $Y\simeq\F_p(X)$. Since $\beta_{p,X}(\rho)\neq0$, there is $x^*\in X^*$ such that $x^*(\beta_{p,X}(\rho))\neq0$. Thus $x^*\circ\beta_{p,X}\in\F_p(X)^*$ does not vanish at $\rho$, and Lemma~\ref{lem:hyper}\ref{it:hyper:1} shows that $Y$ is isomorphic to a hyperplane of $\F_p(X)$. On the other hand, $\F_p(X)\oplus\RR\simeq\F_p(X)$ by \cite{AACD2020b}*{Corollary~3.3}, so $\F_p(X)$ is isomorphic to one of its hyperplanes. Hence Lemma~\ref{lem:hyper}\ref{it:hyper:2} yields $Y\simeq\F_p(X)$.
\end{proof}
\section{The isometric \texorpdfstring{$p$}{p}-lifting property}\label{sec:isometric-p-liftings}\noindent
The quantitative proximity estimate from Section~\ref{sec:BVCL} also imposes a strong restriction on linear right inverses of the $p$-barycentric map. The additional ingredient is a simple two-dimensional observation about how elementary molecules can lie in a Lipschitz-free $p$-space.

We begin with a closedness property of the set of elementary molecules. For this purpose, recall that given $0<p\le 1$ and a metric space $\Mcal$, the universal property of $\F_p(\Mcal)$ yields a canonical linear contraction
\[
J_p[\Mcal] \colon \F_p(\Mcal) \to \F_1(\Mcal)
\]
such that
\begin{equation}\label{tag:99}
J_p[\Mcal]\circ\delta_{p,\Mcal}=\delta_{1,\Mcal}.
\end{equation}

\begin{proposition}\label{prop:elementary molecules-closed}
Let $0<p\leq1$ and let $\Mcal$ be a pointed complete metric space. Then $\Acal_{p,\Mcal}$ is closed in $\F_p(\Mcal)$.
\end{proposition}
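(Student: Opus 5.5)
The plan is to take a sequence $\mu_n=\mu_{p,\Mcal}(x_n,y_n)$ in $\Acal_{p,\Mcal}$ converging in $\F_p(\Mcal)$ to some $\mu$, and to show that $\mu\in\Acal_{p,\Mcal}$. Since $J_p[\Mcal]$ is a linear contraction, $\norm{\mu_n-\mu_m}_{\F_1}\le\norm{\mu_n-\mu_m}_{\F_p}$. So $(J_p[\Mcal](\mu_n))$ is Cauchy in $\F_1(\Mcal)$, and by \eqref{tag:99} these vectors are the ordinary molecules $(\delta_{1,\Mcal}(x_n)-\delta_{1,\Mcal}(y_n))/d(x_n,y_n)$. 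This lets us test against arbitrary real $1$-Lipschitz functions $f$ vanishing at the base point, via
\[
\langle f,J_p[\Mcal](\mu_n)\rangle=\frac{f(x_n)-f(y_n)}{d(x_n,y_n)} .
\]
These pairings are uniformly Cauchy in $n$ over such $f$. The aim is to extract from this that $(x_n)$ and $(y_n)$ are Cauchy in $\Mcal$, with limits $x\neq y$. Completeness of $\Mcal$ then gives the limit points. Continuity of $\delta_{p,\Mcal}$ and of $(u,v)\mapsto d(u,v)$ gives $\mu_n\to\mu_{p,\Mcal}(x,y)$, so $\mu=\mu_{p,\Mcal}(x,y)\in\Acal_{p,\Mcal}$.

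The first step is to fix $n$ large and plug in the $1$-Lipschitz function
\[
f_n(z)=\tfrac12\bigl(d(z,y_n)-d(z,x_n)\bigr)-\tfrac12\bigl(d(0,y_n)-d(0,x_n)\bigr).
\]
It pairs to $1$ with $J_p[\Mcal](\mu_n)$. Hence for $m\ge n$ the pairing with $J_p[\Mcal](\mu_m)$ is at least $1-\eta_n$, where $\eta_n=\sup_{m\ge n}\norm{\mu_m-\mu_n}\to0$. Written out, this says
\[
d(x_m,y_n)+d(y_m,x_n)-d(x_m,x_n)-d(y_m,y_n)\ \ge\ 2(1-\eta_n)\,d(x_m,y_m).
\]
Together with the triangle inequalities, this forces $d(x_m,x_n)+d(y_m,y_n)$ to be small relative to $d(x_m,y_m)+d(x_n,y_n)$. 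It remains to control the scale $d_n=d(x_n,y_n)$.

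The next step is to show that $d_n$ stays bounded and bounded away from $0$. For the upper bound, the plan is to use a bump $z\mapsto\max\{0,r-d(z,a)\}$ at a point $a$ where $\mu$ has mass, meaning its pairing with $\mu$ is nonzero. Such a point exists because $\norm{\mu}=1$ and the span of $\delta$'s is dense. If $d_n\to\infty$, the pairing of this bump with $\mu_n$ is at most $r/d_n\to0$, which is a contradiction. Once $d_n$ converges to some $d\in(0,\infty)$, the displayed inequality yields that $(x_n)$ and $(y_n)$ are Cauchy, and the limits are distinct because $d(x,y)=d>0$.

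The main obstacle is excluding $d_n\to0$ along a subsequence. In $\F_1$ this is the familiar issue that molecules could in principle accumulate at ``derivation-like'' limits. I would argue as follows. If $d_n\to0$, the displayed inequality still makes $(x_n)$ Cauchy, with some limit $x$. Then $f(z)=\max\{0,r-d(z,x)\}$, and more generally its translates, pair with $\mu_n$ to values that are bounded by $1$ but must converge. One then compares two such test functions whose difference is $1$-Lipschitz but has small local slope near $x$. This should show that the uniform limit functional would have to be supported at the single point $x$. Such a functional annihilates $\delta$-differences of distinct points, and I would derive a contradiction from this together with $\norm{\mu_n}=1$, using the quantitative Cauchy condition to get a uniform lower bound on the pairings. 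If this direct argument proves delicate, I would instead work in $\F_p$ itself, using the universal property with an $\ell_p^2$-valued map built from two bumps near $x_n$ and $y_n$, in the spirit of Lemma~\ref{lem:h-lipschitz}. The goal there is to show that $\norm{\mu_n-\mu_m}$ stays bounded below whenever $d_m\ll d_n$.
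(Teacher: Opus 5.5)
\textbf{There is a genuine gap.} Your overall strategy is sound: test the Cauchy sequence $(\mu_n)$ against real-valued Lipschitz functions, show that the endpoints converge to distinct points, and identify the limit via continuity of $\delta_{p,\Mcal}$. However, the step that is supposed to make $(x_n)$ and $(y_n)$ Cauchy fails.

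The displayed four-point inequality does not force $d(x_m,x_n)+d(y_m,y_n)$ to be small, at any scale. The same is true when you add its mirror image with $m$ and $n$ exchanged, since that has the same left-hand side. In $\Mcal=\ell_1^2$ take $y_n=(0,0)$, $x_n=(1,0)$, $y_m=(0,t)$, $x_m=(1,t)$. The left-hand side is $(1+t)+(1+t)-t-t=2=2d(x_m,y_m)$, so the inequality holds even with $\eta_n=0$, for every $t\ge 0$. Yet $d(x_m,x_n)+d(y_m,y_n)=2t$. The reason is that here $f_n(z)=\tfrac12(1+\abs{z_1}-\abs{z_1-1})$ depends only on the first coordinate, so it cannot see translations transversal to the segment.

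Consequently, your sentence ``once $d_n$ converges \dots\ the displayed inequality yields that $(x_n)$ and $(y_n)$ are Cauchy'' is unfounded. So is the claim that the display makes $(x_n)$ Cauchy when $d_n\to0$. Your treatment of that case (``this should show \dots'') is a heuristic, not an argument. The upper-bound step also rests on an unproved claim: density of $\spn(\delta_{p,\Mcal}(\Mcal))$ does not by itself produce a bump with nonzero pairing against $\mu$.

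The missing idea is to test with functions localized at the scale $d_n$. Put $\eta=\norm{\mu_m-\mu_n}<1$ and $g(z)=\max\{0,d_n-d(z,y_n)\}$, shifted by a constant to vanish at the base point; the shift does not affect pairings with molecules. Then $g$ is $1$-Lipschitz, $0\le g\le d_n$, and $g$ pairs to $-1$ with $\mu_n$. Hence $g(y_m)-g(x_m)\ge(1-\eta)d_m>0$, so $g(y_m)=d_n-d(y_m,y_n)$. Therefore $d(y_m,y_n)\le d_n-(1-\eta)d_m$, and in particular $(1-\eta)d_m\le d_n$. Exchange $m$ and $n$, add, and repeat with a bump centered at $x_n$. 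You get
\[
1-\eta\le\frac{d_m}{d_n}\le\frac{1}{1-\eta},
\qquad
d(x_m,x_n)+d(y_m,y_n)\le\eta\,(d_m+d_n).
\]
Since $\norm{\mu_m-\mu_n}\to0$, the sequence $(\log d_n)$ is Cauchy, so $d_n\to d\in(0,\infty)$. This rules out $d_n\to\infty$ and $d_n\to0$ at once, with no analysis of derivation-like limits. Then $(x_n)$ and $(y_n)$ are Cauchy, and your last step finishes the proof.

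Repaired this way, your route is genuinely different from the paper's and more elementary. The paper gets the case $p=1$ from the known fact that the weak closure of $\Acal_{1,\Mcal}$ lies in $\Acal_{1,\Mcal}\cup\{0\}$. It gets the case $p<1$ by writing $\Acal_{p,\Mcal}=J_p[\Mcal]^{-1}(\Acal_{1,\Mcal})$, which needs the nontrivial injectivity of $J_p[\Mcal]$. Your argument uses Lipschitz functions only as test functions and never needs that injectivity.
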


\begin{proof}
Suppose first that $p=1$. By \cite{GLPPR2018}*{Proposition~2.9} the weak closure of $\Acal_{1,\Mcal}$ is contained in $ \Acal_{1,\Mcal}\cup\{0\}$. Hence the same is true for its norm closure. Since every elementary molecule has norm one, the origin cannot belong to the norm closure of $\Acal_{1,\Mcal}$. Thus $\Acal_{1,\Mcal}$ is norm closed.

Now let $0<p<1$. By \cite{AlbiacAnsorena2026b}*{Theorem~A}, the map $J_p[\Mcal]$ is one-to-one. Moreover, by \eqref{tag:99},
\[
J_p[\Mcal]\enpar{\Acal_{p,\Mcal}}=\Acal_{1,\Mcal}.
\]
Injectivity therefore gives that
\[
\Acal_{p,\Mcal} =J_p[\Mcal]^{-1} \enpar{\Acal_{1,\Mcal}}.
\]
Since $J_p[\Mcal]$ is continuous and $\Acal_{1,\Mcal}$ is closed, the conclusion follows.
\end{proof}

The next lemma is purely algebraic. It says that a two-dimensional subspace can contain elementary molecules associated with at most three unordered pairs of points.

Given a set $\Mcal$ and $k\in\NN$, we write $[\Mcal]^k$ for the collection of all subsets of $\Mcal$ with cardinality $k$.

\begin{lemma}\label{lem:elementary molecules-two-dimensional}
Let $0<p\le 1$, let $\Mcal$ be a $p$-metric space, and let $E$ be a two-dimensional subspace of $\F_p(\Mcal)$. Then the set
\[
\mathcal{D}=\enbrace{\{x,y\} \in [\Mcal]^2 \colon \mu_{p,\Mcal}(x,y) \in E}
\]
has at most three elements.
\end{lemma}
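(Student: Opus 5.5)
The proof is purely combinatorial. I would encode $\mathcal{D}$ as a graph on $\Mcal$ and show that its edge vectors keep their linear rank inside $\F_p(\Mcal)$. Write $\mathbb{R}^{(\Mcal)}$ for the space of finitely supported real functions on $\Mcal$, with unit vectors $(e_x)_{x\in\Mcal}$. Let $\Lambda\colon\mathbb{R}^{(\Mcal)}\to\F_p(\Mcal)$ be the linear map with $\Lambda(e_x)=\delta_{p,\Mcal}(x)$.

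The first step is to identify the kernel of $\Lambda$. Since $\delta_{p,\Mcal}(0)=0$ and the family $(\delta_{p,\Mcal}(x))_{x\in\Mcal\setminus\{0\}}$ is linearly independent, we get $\ker\Lambda=\spn\{e_0\}$. Let $W$ be the subspace of $\mathbb{R}^{(\Mcal)}$ of functions with coordinate sum zero. Then $W\cap\spn\{e_0\}=\{0\}$, so $\Lambda$ is injective on $W$. Each molecule $\mu_{p,\Mcal}(x,y)$ is a nonzero multiple of $\Lambda(e_x-e_y)$, and $e_x-e_y\in W$. Hence the following holds for any finite subset $\mathcal{D}_0\subseteq\mathcal{D}$: the molecules indexed by $\mathcal{D}_0$ span a subspace of $\F_p(\Mcal)$ of the same dimension as $\spn\{e_x-e_y\colon\{x,y\}\in\mathcal{D}_0\}$ in $\mathbb{R}^{(\Mcal)}$. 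This dimension is therefore at most $\dim E=2$.

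Next, regard $\mathcal{D}_0$ as the edge set of a finite simple graph $G$ whose vertices are the points appearing in it. The standard incidence-vector fact gives
\[
\dim\spn\{e_x-e_y\colon \{x,y\}\in\mathcal{D}_0\}=v-k,
\]
where $v$ is the number of vertices of $G$ and $k$ the number of its connected components. To verify this, note that a spanning forest of $G$ gives $v-k$ independent edge vectors, by induction peeling off leaves. Every other edge closes a cycle, so its edge vector is a signed sum of forest edge vectors along a path.

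Finally, count edges. Each component of $G$ has at least one edge, hence at least $2$ vertices and contributes at least $1$ to $v-k$. With $v-k\le 2$, only three cases occur. If $k=1$ and $v\le 3$, there are at most $\binom{3}{2}=3$ edges. If $k=2$ and $v=4$, the graph consists of two disjoint edges, so there are $2$ edges. Larger $k$ is impossible, since $k\ge3$ would force $v-k\ge3$. Thus $|\mathcal{D}_0|\le3$ for every finite $\mathcal{D}_0\subseteq\mathcal{D}$, and so $|\mathcal{D}|\le3$.

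The only point that needs care is the first step, namely that $\Lambda$ is injective on $W$. This is exactly where linear independence of the $\delta$'s and the convention $\delta_{p,\Mcal}(0)=0$ enter; the rest is elementary graph theory.
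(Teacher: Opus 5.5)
Your proof is correct, and it takes a genuinely different route from the paper's.

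The paper works directly inside $E$. It picks two distinct pairs $\{x,y\},\{u,v\}\in\mathcal{D}$ and notes that $\delta_{p,\Mcal}(x)-\delta_{p,\Mcal}(y)$ and $\delta_{p,\Mcal}(u)-\delta_{p,\Mcal}(v)$ form a basis of $E$. It then checks by hand which combinations $t(\delta_{p,\Mcal}(x)-\delta_{p,\Mcal}(y))+s(\delta_{p,\Mcal}(u)-\delta_{p,\Mcal}(v))$ can be a multiple of a single such difference, in two cases:
\begin{enumerate}[label=(\alph*)]
\item four distinct points, where only the two chosen pairs survive;
\item a shared point, where $\mathcal{D}$ is exactly the triangle on three points.
\end{enumerate}
You instead pull everything back to the sum-zero subspace $W$ of $\RR^{(\Mcal)}$, where $\Lambda$ is injective. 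You then use the rank formula $v-k$ for incidence vectors of a graph, followed by an edge count.

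The paper's argument is shorter and gives the exact shape of $\mathcal{D}$: empty, a single pair, two disjoint pairs, or a triangle. Yours makes explicit where linear independence of the $\delta$'s is used. Injectivity of $\Lambda$ on $W$ is what makes the paper's phrase ``the representation involves all four points'' rigorous when one of the points is the base point, since $\delta_{p,\Mcal}(0)=0$ there.

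Your method also generalizes at no extra cost. An $n$-dimensional subspace of $\F_p(\Mcal)$ contains molecules from at most $\binom{n+1}{2}$ unordered pairs: $v-k\le n$, and the edge count is maximized by a single complete component on $n+1$ vertices. For the application in the corollary that follows, only finiteness of $E\cap\Acal_{p,X}$ matters, so either argument suffices.
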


\begin{proof}
For distinct $a,b\in\Mcal$, put $\Delta(a,b)=\delta_{p,\Mcal}(a)-\delta_{p,\Mcal}(b)$. Since $\mu_{p,\Mcal}(a,b)$ is a nonzero scalar multiple of $\Delta(a,b)$, we have
\[
\{a,b\}\in\mathcal{D}
\quad\Longleftrightarrow\quad
\Delta(a,b)\in E.
\]

There is nothing to prove if $\#\mathcal{D}\leq1$. Otherwise, choose distinct $\{x,y\},\{u,v\}\in\mathcal{D}$, labeling the second pair so that $v\notin\{x,y\}$. The vectors $\Delta(x,y)$ and $\Delta(u,v)$ are linearly independent, and hence every $\mu\in E$ has a unique representation
\[
\mu=t\Delta(x,y)+s\Delta(u,v),\qquad t,s\in\RR.
\]

Suppose first that $u\notin\{x,y\}$. Then $x,y,u,v$ are pairwise distinct, and if $ts\neq0$, the above representation involves all four points. Hence it cannot be a scalar multiple of $\Delta(a,b)$ for any $a\neq b$. It follows that
\[
\mathcal{D}=\enbrace{\{x,y\},\{u,v\}}.
\]

It remains to consider the case $u\in\{x,y\}$. Relabeling $x,y$ if necessary, we may assume that $u=y$. Then
\[
t\Delta(x,y)+s\Delta(y,v)
=t\delta_{p,\Mcal}(x)+(s-t)\delta_{p,\Mcal}(y) -s\delta_{p,\Mcal}(v).
\]
Such a vector can be a scalar multiple of $\Delta(a,b)$ only if one of $t$, $s$, or $s-t$ vanishes. These three possibilities correspond respectively to the pairs $\{y,v\}$, $\{x,y\}$, and $\{x,v\}$. Thus
\[
\mathcal{D}\subset
\enbrace{\{x,y\},\{y,v\},\{x,v\}}.
\]
The reverse inclusion follows from the choice of the first two pairs and from $\Delta(x,v)=\Delta(x,y)+\Delta(y,v)\in E$. Hence $\#\mathcal D=3$ in this case.
\end{proof}

We now combine this observation with the quantitative proximity estimate from Section~\ref{sec:BVCL}.

\begin{corollary}\label{cor:barycenter-no-isometry}
Let $0<p<1$ and let $X$ be a Banach space. Then the $p$-barycentric map $\beta_{p,X}\colon\F_p(X)\to X$ cannot be isometric on any linear subspace of $\F_p(X)$ of dimension at least two.
\end{corollary}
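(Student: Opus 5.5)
Suppose, towards a contradiction, that $E\subseteq\F_p(X)$ is a linear subspace with $\dim E\ge 2$ on which $\beta:=\beta_{p,X}$ is isometric, that is, $\norm{\beta(\mu)}=\norm{\mu}$ for all $\mu\in E$. Restricting to a two-dimensional subspace, we may assume $\dim E=2$. Since $\beta$ is linear and continuous, it is also isometric on $\overline{E}=E$, which is finite-dimensional and hence closed. The plan is to show that every norm-one vector of $E$ lies in $\Acal_{p,X}$. This would place infinitely many elementary molecules in $E$, and we then derive a contradiction with Lemma~\ref{lem:elementary molecules-two-dimensional}.

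\emph{Step 1.} Let $\mu\in E$ with $\norm{\mu}=1$. Then $\norm{\beta(\mu)}=1$, so Lemma~\ref{lem:concentration} gives
\[
d_{\F_p(X)}(\mu,\Acal_{p,X})^p\le \frac{2}{1-p}\enpar{\norm{\mu}^p-1}=0 .
\]
Hence $\mu$ lies in the closure of $\Acal_{p,X}$. The metric space $X$ is complete, so Proposition~\ref{prop:elementary molecules-closed} shows that $\Acal_{p,X}$ is closed, and therefore $\mu\in\Acal_{p,X}$. Consequently the whole unit sphere $S_E$ of $E$ consists of elementary molecules.

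\emph{Step 2.} For each pair $\{x,y\}\in[X]^2$, the elementary molecules determined by it are exactly $\pm\mu_{p,X}(x,y)$. So at most two points of $S_E$ correspond to a given pair, and the assignment $\mu\mapsto\{x,y\}$ from $S_E$ to $\mathcal D$ is at most two-to-one. I expect this step to need the most care, namely checking that an elementary molecule determines its unordered pair up to sign. This follows from the linear independence of $(\delta_{p,X}(x))_{x\ne0}$, where one treats $\delta_{p,X}(0)=0$ separately: the identity $\delta(x)-\delta(y)=\lambda(\delta(u)-\delta(v))$ forces $\{x,y\}=\{u,v\}$.

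\emph{Step 3.} Since $E$ is a two-dimensional real quasi-normed space, $S_E$ is infinite. Indeed, it meets every line through the origin, and there are infinitely many such lines. By Step 2, $\mathcal D$ is then infinite. This contradicts Lemma~\ref{lem:elementary molecules-two-dimensional}, which allows at most three pairs, and the corollary follows.
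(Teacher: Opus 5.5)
Your proof is correct and matches the paper's argument step for step: Lemma~\ref{lem:concentration} together with the closedness of $\Acal_{p,X}$ puts $S_E$ inside $\Acal_{p,X}$, and the three-pair bound of Lemma~\ref{lem:elementary molecules-two-dimensional} then limits $E\cap\Acal_{p,X}$ to at most six elements. The extra care you take in Step 2 is harmless but unnecessary, because the inclusion $E\cap\Acal_{p,X}\subseteq\{\pm\mu_{p,X}(x,y)\colon\{x,y\}\in\mathcal D\}$ holds directly from the definition of $\mathcal D$.
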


\begin{proof}
It is enough to rule out a two-dimensional subspace. Suppose that $E\subseteq\F_p(X)$ is two-dimensional and that $\restr{\beta_{p,X}}{E}$ is an isometry. For every $u\in S_E$ we then have $\norm{\beta_{p,X}(u)}=1$. Lemma~\ref{lem:concentration} gives
\[
d_{\F_p(X)} \enpar{u,\Acal_{p,X}}^p
\leq \frac{2}{1-p} \enpar{ \norm{u}^p-1}
=0.
\]
Proposition~\ref{prop:elementary molecules-closed} implies that $ u\in\Acal_{p,X}$. Thus
\[
S_E\subseteq E\cap\Acal_{p,X}.
\]
Lemma~\ref{lem:elementary molecules-two-dimensional}, however, shows that there are at most three unordered pairs $\{x,y\}$ for which $\mu_{p,X}(x,y)\in E$. Each such pair determines exactly two elementary molecules, namely $\mu_{p,X}(x,y)$ and $\mu_{p,X}(y,x)$, and so,
\[
\#\enpar{E\cap\Acal_{p,X}}\leq 6.
\]
This is impossible, since the unit sphere of any two-dimensional space is infinite.
\end{proof}

There are, of course, infinite-dimensional $p$-Banach spaces with the isometric $p$-lifting property. In particular, every Lipschitz-free $p$-space has it. Indeed, fix a $p$-metric space $\Mcal$ and let
\[
Z=\F_p(\Mcal).
\]
The isometric map
\[
\delta_{p,Z}\circ\delta_{p,\Mcal}\colon \Mcal\to\F_p(Z)
\]
linearizes to a contractive linear map
\[
R\colon Z\to\F_p(Z)
\]
such that
\[
R\circ\delta_{p,\Mcal}
=\delta_{p,Z}\circ\delta_{p,\Mcal}.
\]
Applying the $p$-barycentric map associated with $Z$ gives
\[
\beta_{p,Z}\circ R\circ\delta_{p,\Mcal}
= \delta_{p,\Mcal}.
\]
Since the linear span of $\delta_{p,\Mcal}(\Mcal)$ is dense in $Z$, it follows that
\[
\beta_{p,Z}\circ R=\Id_Z.
\]
Thus $Z$ has the isometric $p$-lifting property. For Banach spaces, however, the picture is completely different.

\begin{theorem}\label{thm:p-lifting}
Let $0<p<1$. A real Banach space $X$ has the isometric $p$-lifting property if and only if $ \dim(X)\le 1$.
\end{theorem}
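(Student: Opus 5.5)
The plan is to treat the two implications separately. The nontrivial direction, that dimension at least two rules out the property, will come from Corollary~\ref{cor:barycenter-no-isometry}. The converse, for $\dim(X)\le 1$, will be handled by an explicit lifting.

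For the \emph{only if} direction, suppose $\dim(X)\ge 2$ and that $R\colon X\to\F_p(X)$ is a contractive linear map with $\beta_{p,X}\circ R=\Id_X$. For every $x\in X$ we get
\[
\norm{x}=\norm{\beta_{p,X}(R(x))}\le\norm{R(x)}\le\norm{x},
\]
because both $\beta_{p,X}$ and $R$ are contractive. Hence $R$ is a linear isometry. Now pick a two-dimensional subspace $F\subseteq X$ and set $E=R(F)$. Since $R$ is an injective linear map, $E$ is a two-dimensional subspace of $\F_p(X)$. On $E$ the map $\beta_{p,X}$ agrees with $R^{-1}$, so for $u=R(x)\in E$ we have $\norm{\beta_{p,X}(u)}=\norm{x}=\norm{u}$. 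Thus $\beta_{p,X}$ is isometric on $E$. This contradicts Corollary~\ref{cor:barycenter-no-isometry}, which settles this direction.

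For the \emph{if} direction, the case $X=\{0\}$ is trivial: take $R=0$. If $\dim(X)=1$, fix $e\in S_X$ and define $R(te)=t\,\delta_{p,X}(e)=t\,\mu_{p,X}(e,0)$. This map is linear, and it is contractive since $\norm{\delta_{p,X}(e)}=d_X(e,0)=1$ (because $\delta_{p,X}$ is an isometry with $\delta_{p,X}(0)=0$). Finally, $\beta_{p,X}(R(te))=te$, so $\beta_{p,X}\circ R=\Id_X$.

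The main obstacle has already been handled by the corollary, namely the concentration estimate combined with the finiteness of elementary molecules in a plane. The only point to check here is that a contractive lifting is automatically isometric, and that its image of a plane is a genuine two-dimensional subspace on which $\beta_{p,X}$ is isometric. Both follow directly from the displayed chain of inequalities.
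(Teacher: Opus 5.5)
Your proposal is correct and follows essentially the same route as the paper. Both proofs use the explicit lifting $te\mapsto t\,\delta_{p,X}(e)$ when $\dim(X)=1$. For the converse, both note that a contractive lifting composed with the contractive map $\beta_{p,X}$ forces $\beta_{p,X}$ to be isometric on a subspace of dimension at least two, which contradicts Corollary~\ref{cor:barycenter-no-isometry}.
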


\begin{proof}
The cases when $\dim(X) \le 1$ are immediate. Indeed, if $X=\{0\}$ there is nothing to prove, while if $\dim(X)=1$, the mapping $R\colon X \to \F_p(X)$ defined from $e\in S_X$ by
\[
t\, e \mapsto t \,\delta_{p,X}(e), \qquad t\in\RR,
\]
is a linear isometry such that $\beta_{p,X}\circ R=\Id_X$.

Conversely, suppose that $R\colon X\to\F_p(X)$ is a linear contraction satisfying $ \beta_{p,X}\circ R=\Id_X$ and set $E=R(X)$. Since $\beta_{p,X}$ is also contractive and $\beta_{p,X}\circ R$ is an isometry, both $R$ and $\restr{\beta_{p,X}}{E}$ are isometries. If $\dim(X)\ge 2$, then $\dim (E)\ge2$, contradicting Corollary~\ref{cor:barycenter-no-isometry}. Therefore $\dim(X)\le 1$.
\end{proof}

\section{Statements and Declarations}
\subsection*{Conflict of Interest}
The authors declare that they have no conflict of interest.
\subsection*{Data Availability} Since no datasets were generated or analyzed during the current study, data sharing does not apply to this article.
\subsection*{Generative AI disclosure}
The authors acknowledge the use of ChatGPT (OpenAI, GPT-5.6 Sol) in the development of this paper. Through an iterative exchange guided by the authors, ChatGPT produced a preliminary draft. The authors subsequently analyzed the draft in detail, revised and improved the mathematical contents, considerably and significantly simplifying the original arguments, and constructed the paper in its present form. Therefore, all statements, proofs and references appearing in this manuscript have been carefully revised by the authors, who accept the sole responsibility for any remaining inaccuracies or mistakes.

\end{document}